\documentclass[final,times]{article}
\usepackage{fullpage}
\usepackage{lipsum}
\usepackage{amsfonts}
\usepackage{graphicx}
\usepackage{epstopdf}
\usepackage{tikz}
\usepackage{algorithmic}
\usepackage{booktabs}
\usepackage{amsmath}
\usepackage{amssymb}
\usepackage{amsthm}
\usepackage{mathrsfs}
\usepackage{mathtools}
\usepackage{amsopn}
\usepackage{hyperref}
\usepackage{todonotes}
\usepackage{subcaption}
\usepackage{comment}
\usepackage{authblk}
\graphicspath{{./images/}}

\newcommand{\be}{\begin{equation}}
\newcommand{\ee}{\end{equation}}

\newcommand{\scri}{\mathscr{I}}
\newtheorem{definition}{Definition}[section] 

\newtheorem{remark}{Remark}
\newtheorem{theorem}{Theorem}[section]

  \title{Finite Elements for Helmholtz Scattering with Infinity as a Computational Boundary}

\author[1]{Markus Wess}
\affil[1]{Institute of Analysis and Scientific Computing, Technische Universit\unexpanded{\"a}t Wien, A-1040, Vienna, Austria.\authorcr
    \tt markus.wess@tuwien.ac.at}

\author[2]{An\i l Zengino\u{g}lu}
\affil[2]{Institute for Physical Science and Technology,
            University of Maryland,
            College Park,
            20742,
            MD,
            USA.\authorcr
    \tt anil@umd.edu}

\begin{document}
\maketitle
\begin{abstract}
Building on the null-infinity-layer construction, we develop an \(H^1\)-conforming finite-element formulation of hyperboloidal compactification for the exterior Helmholtz equation. A change of coordinates maps infinity to a finite outer boundary, and a rescaling removes the leading oscillatory decay. We derive the transformed equation and a global sesquilinear weak formulation with bounded coefficients. The compactified boundary contributes an explicit boundary mass term, and its trace gives the far-field pattern up to a known normalization. We compare the resulting method with finite-element discretizations using perfectly matched layers (PML) and report benchmark results in two and three dimensions. Numerical experiments include scattering by a unit disk, resonance in a trapping geometry, a manufactured benchmark in three dimensions, and a submarine benchmark.
\end{abstract}

\tableofcontents

\section{Introduction}
\label{sec:intro}

We study time-harmonic scattering by a bounded obstacle in a homogeneous medium. Let $\mathcal O\subset\mathbb R^d$, with $d\in\{2,3\}$, be a bounded Lipschitz domain with boundary $\partial\mathcal O$, representing the obstacle, and denote by $\mathcal O^c:=\mathbb R^d\setminus\overline{\mathcal O}$ the physical exterior domain. We use $\tilde x\in\mathcal O^c$ for physical coordinates and write $r=\|\tilde x\|$ and $\hat x=\tilde x/r\in\mathbb S^{d-1}$. 
Given a wavenumber $k>0$, the physical radiating scattered field
$U^{\mathrm{s}}:\mathcal O^c\to\mathbb C$ satisfies the exterior Dirichlet scattering problem
\begin{align}
  (\Delta_{\tilde x}+k^2)U^{\mathrm{s}} &= 0
  &&\text{in }\mathcal O^c, \label{eq:helmholtz}\\
  U^{\mathrm{s}} &= -U^{\mathrm{i}}
  &&\text{on }\partial\mathcal O, \label{eq:dirichlet_bc}\\
  \lim_{r\to\infty}
  r^{\frac{d-1}{2}}
  \left(\partial_r U^{\mathrm{s}} - i k U^{\mathrm{s}}\right) &= 0
  &&\text{uniformly in }\hat x .
  \label{eq:sommerfeld}
\end{align}

To solve the Helmholtz equation without artificial truncation, we map the unbounded exterior domain to a bounded computational domain using hyperboloidal compactification \cite{Zenginoglu2011HyperboloidalLayers, Zenginoglu2026NullInfinityLayer}. 
We choose the compactification center to be the origin and assume
$0\in\mathcal O$. The compactified coordinates are denoted by
$x$, with $\rho=\|x\|$, and are related to the physical coordinates by
\begin{equation}\label{eq:compactification}
  \tilde x=\frac{x}{\Omega(\rho)} ,
  \qquad
  r=\|\tilde x\|=\frac{\rho}{\Omega(\rho)}, 
\end{equation}
Outside a radius $R$ with $\overline{\mathcal O}\subset B_R(0)$, the radial compactification maps the unbounded physical radius $r\in[R,\infty)$ to the finite layer $\rho\in[R,S)$, with $\scri:=\{\rho=S\}$ representing infinity. The boundary defining function $\Omega$ is chosen such that $\Omega=1$ at the interface $\rho=R$, $\Omega>0$ for $R\le\rho<S$, and $\Omega(S)=0$. Inside $B_R(0)$ we use the identity map. The compactified computational domain is $\mathcal D:=B_S(0)\setminus\overline{\mathcal O}$, with inner boundary $\partial\mathcal O$ and outer compactified boundary $\scri$. 

The central result of this paper is a weak formulation of the hyperboloidal problem for the homogeneous Helmholtz equation. Hyperboloidal compactification for the Helmholtz equation reformulates the exterior problem on a bounded computational domain by combining radial compactification with a rescaling of the unknown that removes the algebraic decay and leading oscillations of outgoing waves \cite{Zenginoglu2026NullInfinityLayer, zenginouglu2026penrose}. 

\subsection{Literature review}
\label{sec:literature_review}

The exterior Helmholtz problem has two numerical difficulties: the computational domain is unbounded and radiating solutions remain oscillatory at arbitrarily large distances. Both issues have been studied extensively (see, for example, \cite{schot1992eighty,TSYNKOV1998465,HagstromReview,Johnson2021PML,Astley2000IEReview}). A common approach is to truncate the domain and impose an approximate radiation condition at an artificial boundary. Classical local absorbing boundary conditions include those of Engquist--Majda \cite{engquist1977absorbing} and Bayliss--Turkel \cite{bayliss1980radiation}; later work developed practically implementable local conditions of arbitrarily high order \cite{hagstrom1998formulation,givoli2001high}. Such conditions can be interpreted as local approximations to the exterior Dirichlet-to-Neumann map. However, Pad\'e-type local DtN conditions incur a relative error bounded below independently of frequency \cite{galkowski2024local}.

Perfectly matched layers and absorbing layers instead introduce a complex stretch or damping layer around the physical region \cite{Berenger1994PML,Johnson2021PML}. For radial PMLs, the truncation error can decay exponentially with frequency, with the decay rate controlled by the layer width and the complex-scaling angle \cite{galkowski2023perfectly}.
These volumetric truncations are local and straightforward to couple to finite elements, but their accuracy depends on the layer thickness, the stretching or damping profile, and the outer truncation.

By contrast, the exact exterior Dirichlet-to-Neumann map, or an equivalent exact nonreflecting boundary operator, yields an exact truncation on a suitable artificial boundary. Such operators are generally nonlocal and may require dense boundary operators, modal expansions, or, in the time domain, convolution \cite{Givoli1991NRBCReview,GroteKeller1995NRBC, AlpertGreengardHagstrom2000Kernels}. Boundary-integral formulations reformulate constant-coefficient scattering problems as equations on the obstacle boundary and thereby build in the Sommerfeld condition \cite{ColtonKress2013,Kress1991BIEAcoustic}. They are highly effective for homogeneous exterior media and provide far-field quantities naturally, but they rely on the availability of a fundamental solution and become less direct for variable-coefficient or nonlinear problems.

A related method to hyperboloidal compactification is the finite--infinite element method. Infinite elements couple standard finite elements in a bounded near-field region to exterior approximation spaces that encode outgoing asymptotics \cite{Gerdes1998SummaryIE,Astley2000IEReview, DemkowiczShen2006IE}. In conjugated infinite elements, the leading outgoing factor $r^{-(d-1)/2} e^{ikr}$ is incorporated into the exterior basis, and the remaining amplitude is approximated numerically \cite{Gerdes1998ConjugatedIE,DemkowiczShen2006IE}. Hardy space infinite elements and complex-scaled infinite elements include related realizations of the radiation condition through analytic continuation, pole conditions, exterior complex scaling, or radial-PML-related constructions \cite{HohageNannen2009HardyIE,Halla2016HardyIE, NannenSchaedle2011HardyIE,NannenWess2022ComplexIE, HallaKachanovskaWess2025RadialPMLIE}. These methods show that incorporating outgoing asymptotics or complex-scaled decay directly into the exterior discretization can lead to highly accurate solvers. Hyperboloidal compactification is close in spirit to these approaches because it removes a similar leading radial decay and oscillation before discretization. The subleading behavior, however, is freely chosen. This freedom corresponds to a freedom of time transformations in the time domain. Another difference to infinite elements is that the exterior is compactified to a finite layer, so that infinity becomes an actual computational boundary and standard \(H^1\)-conforming finite elements can be used for the rescaled unknown.

Pure spatial compactifications of unbounded domains also have a long history. Grosch and Orszag studied algebraic and exponential maps from \([0,\infty)\) to a finite interval using Chebyshev expansions \cite{GroschOrszag77}; Boyd developed mapped Chebyshev methods for unbounded domains \cite{boyd1982optimization,boyd2001chebyshev}; and later work analyzed convergence and mapping design in higher dimensions \cite{shen2009some,shen2014approximations}. Such mappings are effective for decaying or sufficiently smooth solutions, but they compress the wavelength of oscillatory Helmholtz solutions near the mapped boundary, turning the infinite domain problem into an infinite-resolution problem. This limitation was already noted by Grosch and Orszag, who observed that ``solutions that oscillate out to infinity are not so amenable to these techniques'' \cite{GroschOrszag77}.

A related PML-type method also uses coordinate compression to translate the unbounded exterior problem to a bounded layer. The idea is to combine the radial transformation with a rescaling that extracts the oscillatory factor explicitly so that the layer equation is regular at the boundary \cite{yang2021truly, wang2025novel, wang2026robust}. The method is closely related to the null-infinity layer in that it uses a real radial compression together with an explicit removal of the outgoing oscillatory factors. Its purpose, however, is primarily domain reduction through a real compressed layer. The outer boundary of the layer is a finite artificial boundary, not a compactified representation of radiative fields. Therefore, the far-field pattern is not obtained as the trace of a rescaled unknown at the outer boundary.

Hyperboloidal compactification avoids the wavelength compression by combining spatial compactification with a time transformation adapted to outgoing characteristics, thereby pushing the outer boundary to the radiation zone \cite{Zenginoglu2008ScriFixing,Zenginoglu2011HyperboloidalLayers}. The construction is based on Penrose's conformal treatment of \emph{null infinity} \cite{Penrose1963Asymptotic,Penrose1965ZeroRestMass} and the subsequent studies of wave equations and Einstein equations \cite{gowdy_wave_1981, friedrich1983cauchy, frauendiener2004conformal}.
Outgoing waves never reach spatial infinity given by $r\to\infty$ at standard fixed time. They arrive far away at later and later times, propagating along the characteristics $t-r=$ const. Null infinity, denoted by $\scri$, is the ideal boundary reached by these outgoing characteristics all the way out. It is where radiation ends up, and where the far-field pattern naturally lives. In the frequency domain where the Helmholtz scattering problem is formulated, null infinity is the boundary on which the far-field pattern is defined. Writing
\[
U(r,\hat x) \sim r^{-(d-1)/2} e^{i k r} u_\infty(\hat x) \quad\text{as }r\to\infty,
\]
we see that the far-field reaches null infinity as the leading factor $r^{-(d-1)/2} e^{i k r}$ is removed by the rescaling \cite{zenginouglu2011geometric}. Hyperboloidal compactification includes null infinity in the computational domain thereby avoiding artificial timelike outer boundaries. This program has been especially successful in numerical relativity, including waveform extraction at null infinity, black-hole perturbation theory, and quasinormal-mode computations \cite{zengin2008hyperboloidal, zenginouglu2010asymptotics, ZenginogluKhanna2011NullInfinity, BernuzziNagarZenginoglu2011LargeMassRatio, VanoVinualesHusaHilditch2015,JaramilloPanossoMacedoAlSheikh2021, assaad2025quasinormal, PanossoMacedoZenginoglu2025QNM}.

Despite the extensive use of hyperboloidal compactification in numerical
relativity, it has seen comparatively little use in finite element simulations
of frequency-domain scattering. One reason is technical: most applications in
numerical relativity exploit smooth spherical or nearly spherical geometries,
whereas computational scattering problems often involve nonsmooth obstacles and
non-spherical near-field geometry. Building on \cite{Zenginoglu2026NullInfinityLayer}
the current paper addresses this gap by developing an $H^1$-conforming finite element
formulation of hyperboloidal compactification for the Helmholtz equation. 
The method retains a spherical compactified boundary but allows general obstacle
geometries inside the computational domain.

\subsection{Outline}

Sec.~\ref{sec:hyperboloidal_compactification} reviews the compactification and rescaling and records the transformed coefficients needed below. Sec.~\ref{sec:var} gives the main new analytical component of this work: the global \(H^1\)-conforming variational formulation, including its outer-boundary term and the proof of consistency and boundedness. Sec.~\ref{sec:numerics} demonstrates the resulting finite-element method on 2D and 3D examples and compares it with radial PML discretizations. \ref{sec:ref_sol} gives the analytic disk-scattering reference solution used in the convergence tests.

\section{Hyperboloidal compactification of the Helmholtz equation}\label{sec:hyperboloidal_compactification}

The method of hyperboloidal compactification consists of two main ideas: Compactification of the exterior, unbounded domain and a suitable rescaling of the solution.
First, in Sec.~\ref{sec:compactification}, we analyze what happens when space is compactified in the exterior region. Note that this is purely geometrical and independent of the underlying equation.
While in the beginning we keep all computations general, in Sec.~\ref{sec:radial_compactification}, we restrict ourselves to purely radial compactifications.
Subsequently, in Sec.~\ref{sec:rescaling}, we turn to the Helmholtz equation and apply a suitable rescaling. In Sec.~\ref{sec:compactification_rescaling}, we then apply the compactification to the rescaled equation and analyze the regularity of the transformed equation as well as the relation of the far-field trace to the transformed variable.

\subsection{Compactification}\label{sec:compactification}
We work in $\mathbb{R}^d$ with $d\in\{2,3\}$ equipped with the standard Euclidean inner product and associated norm $\|a\| := \sqrt{a^\top a}$, and $I:=I_d$ denotes the $d\times d$ identity matrix. The compactified coordinate is denoted by $x\in\mathbb R^d$, while the physical coordinate is denoted by $\tilde x\in\mathbb R^d$. Unless stated otherwise, gradients and divergences without a tilde are taken with respect to $x$. The compactified boundary is denoted by $\scri$ (read ``scri'' for script i).
One way to describe the compactified boundary is by a boundary defining
function.

\begin{definition}[Boundary defining function]\label{def:boundary_defining_function}
Let $\mathcal D\subset\mathbb R^d\setminus\overline{\mathcal O}$ be a bounded
Lipschitz compactified computational domain whose boundary decomposes as
$\partial\mathcal D=\partial\mathcal O\,\dot\cup\,\scri$.
A function $\Omega: \overline{\mathcal D} \mapsto \mathbb R_{\ge 0}$ is called a boundary defining function for $\scri$ if
\[
  \scri=\{x\in\overline{\mathcal D}:\Omega(x)=0\},\qquad
  \Omega(x)>0\quad\text{for }x\in\overline{\mathcal D}\setminus\scri,
\]
and
\[
  \nabla\Omega(x)\neq 0,\qquad
  x^\top\nabla\Omega(x)<0\qquad\text{for all }x\in\scri .
\]
\end{definition}

This boundary defining function is essentially Penrose's conformal factor introduced in \cite{Penrose1963Asymptotic}. The definition is independent of the geometry of $\scri$. The compactified
boundary may be any smooth closed hypersurface with spherical topology, provided
the rays from the compactification center meet $\scri$ transversely from the
interior. In later sections, we will restrict the discussion to the radial
setting by choosing $\Omega$ to depend only on $\|x\|$ which implies that the
compactified boundary is circular in two dimensions and spherical in three
dimensions. More general compactified boundaries can have any geometry with
spherical topology.

Given such a function $\Omega$, we consider the compactification
\begin{equation}\label{eq:map}
  \tilde{x}=\frac{x}{\Omega(x)}
\end{equation}
on the region where $\Omega>0$. The following identities are local and hold
where $L:=\Omega-x^\top\nabla\Omega$ is nonzero. The sign condition in
Definition~\ref{def:boundary_defining_function} implies $L>0$ in a neighborhood $\scri$. 
The radial assumptions in
Sec.~\ref{sec:radial_compactification} ensure that this map gives a bijective
global compactification of the exterior radial coordinate.
The Jacobian matrix is
\begin{equation}\label{eq:J}
  J
  = \frac{\partial \tilde{x}}{\partial x}
  = \Omega^{-1}\left(I-\Omega^{-1}\,x\,(\nabla\Omega)^\top\right).
\end{equation}
Whenever $L\neq 0$, we can invert \eqref{eq:J} via the Sherman--Morrison formula:
\begin{equation}\label{eq:Jinv}
  J^{-1}
  = \Omega\left(I+\frac{1}{L}\,x\,(\nabla\Omega)^\top\right).
\end{equation}
The determinant of the Jacobian matrix reads
\begin{equation}\label{eq:detJ}
  \det J = \Omega^{-(d+1)}\,L.
\end{equation}
It follows that on any region where $\Omega>0$ and $L>0$, the compactification map is locally orientation-preserving. For the transformation of second-order operators, we list the coefficient matrices
\begin{equation}\label{eq:Mdef}
  M^{-1}
  := (\det J)\,J^{-1}J^{-\top},
  \qquad
  M := \left(M^{-1}\right)^{-1} = \frac{1}{\det J}\,J^\top J,
\end{equation}
where $J^{-\top}:=(J^{-1})^\top$. A direct substitution of \eqref{eq:Jinv} and \eqref{eq:detJ} into \eqref{eq:Mdef} gives
\begin{align}
  M
  &= \frac{\Omega^{\,d-1}}{L}
     \left[
       I - \frac{1}{\Omega}\left(x\,(\nabla\Omega)^\top + \nabla\Omega\,x^\top\right)
       + \frac{\|x\|^2}{\Omega^2}\,\nabla\Omega\,(\nabla\Omega)^\top
     \right],
  \label{eq:M}\\[4pt]
  M^{-1}
  &= \Omega^{-(d-1)}
     \left[
       L\,I + x\,(\nabla\Omega)^\top + \nabla\Omega \,x^\top
       + \frac{(\nabla\Omega)^\top \nabla\Omega}{L}\,x\,x^\top
     \right].
  \label{eq:Minv}
\end{align}
We remove the singular factor $\Omega^{-(d-1)}$ by defining the rescaled matrix
\begin{equation}\label{eq:rescaledM}
  A := \Omega^{d-1}\,M^{-1}
  = L\,I + x\,(\nabla\Omega)^\top + \nabla\Omega \,x^\top
  + \frac{(\nabla\Omega)^\top \nabla\Omega}{L}\,x\,x^\top.
\end{equation}
This rescaling plays an essential role in the regularity and boundary behavior of the  compactified Helmholtz equation discussed in Sec.~\ref{sec:regularity}.


\subsubsection{Restriction to radial compactification}\label{sec:radial_compactification}

We now specialize Definition~\ref{def:boundary_defining_function} to radial
layer compactifications. Let $0<R<S$, choose the compactification center at the
origin with $0\in\mathcal O$, and assume that
$\overline{\mathcal O}\subset B_R(0)$. We take $\mathcal D=B_S(0)\setminus\overline{\mathcal O}$, $\scri=\partial B_S(0)$, and choose a radial boundary defining function
$\Omega(x)=\Omega(\rho)$, where $\rho:=\|x\|$, such that $\Omega\equiv1$ for
$\rho\le R$, $\Omega>0$ for $\rho<S$, $\Omega(S)=0$, and
$\Omega'(S)<0$. The compactification map sends the finite layer $R\le \rho<S$ to the unbounded physical exterior $R\le r<\infty$ by
\[
  r=\frac{\rho}{\Omega(\rho)},
\]
with the boundary at infinity represented by $\rho=S$.

Since the compactification center lies inside the obstacle, $\rho>0$ throughout
the computational domain. A prime denotes differentiation with respect to the
compactified radius $\rho$. Define the unit radial vector
$n:=x/\rho\in\mathbb S^{d-1}$ and $L(\rho):=\Omega(\rho)-\rho\,\Omega'(\rho)$.
On the layer $\nabla\Omega=\Omega' n$. Moreover,
\[
  \frac{dr}{d\rho}
  =\frac{\Omega-\rho\Omega'}{\Omega^2}
  =\frac{L}{\Omega^2}.
\]
The boundary defining function gives $L(S)=-S\Omega'(S)>0$; we choose the
radial compactification so that $L$ is strictly positive on the whole layer.
Therefore, the radial map $\rho\mapsto r$ is strictly increasing and, since the
angular variable is unchanged, $x\mapsto\tilde x$ is bijective between the
compactified layer and the physical exterior. We define
\[
  G:=\frac{d\rho}{dr}=\frac{\Omega^2}{L},
\]
with continuous extension $G(S)=0$. Substituting into
\eqref{eq:rescaledM} yields
\begin{equation}\label{eqn:A_matrix}
  A
  = L\,I+\left(2\rho\,\Omega'
      +\frac{\rho^2(\Omega')^2}{L}\right)n n^\top
  = L\,I+(G-L)n n^\top .
\end{equation}
Here we used $\Omega=L+\rho\Omega'$. We decompose $\mathbb R^d$ into the radial
direction and its orthogonal complement using the orthogonal projectors
\begin{equation}\label{eqn:projection}
  Q:=n n^\top,\qquad P:=I-Q.
\end{equation}
These satisfy the usual orthogonal projector identities,
$P^2=P$, $Q^2=Q$, $PQ=QP=0$, and $P+Q=I$. We can then write the transformation
matrix as
\begin{equation}\label{eq:Aproj}
  A=L\,(I-n n^\top)+G\,n n^\top=L\,P+G\,Q .
\end{equation}
The matrix $A$ acts as the scaling $L$ on tangential components and as the
scaling $G$ on the radial component:
\begin{equation}\label{eq:An_property}
  A n=G\,n,
  \qquad
  A v=L\,v \ \ \text{for all } v\in\mathbb R^d
  \text{ with } n^\top v=0 .
\end{equation}
Thus, $A$ has eigenvalue $L$ with multiplicity $d-1$ in the tangential
directions and eigenvalue $G$ with multiplicity $1$ in the radial direction.
The radial conormal degenerates at the compactified boundary: since the outward
unit normal on $\scri$ is $\nu=n$, we have $A\nu=G n$ and $G(S)=0$. This degeneracy is the mechanism by which the compactified weak formulation
avoids an essential boundary condition at infinity.

\subsection{The rescaled Helmholtz equation}\label{sec:rescaling}
Let $\tilde{x}\in\mathbb{R}^d$ denote the physical, noncompactified Cartesian coordinate and consider the homogeneous Helmholtz equation
\begin{equation}\label{eq:helmholtz_tilde}
  \widetilde{\Delta}U + k^2 U = 0,
\end{equation}
where $k>0$ is the wavenumber and $\widetilde{\nabla}$, $\widetilde{\Delta}$ denote the gradient and Laplacian with respect to $\tilde{x}$.

Let $w$ be a smooth nonvanishing complex weight and write $U=w u$. The product rule yields
\begin{equation*}
  \widetilde{\Delta}(w u)
   = \widetilde{\nabla}\cdot \widetilde{\nabla}(w u)
   = \widetilde{\nabla}\cdot\left(u\,\widetilde{\nabla}w + w\,\widetilde{\nabla}u\right)
  = u\,\widetilde{\Delta}w + 2\,\widetilde{\nabla}w\cdot \widetilde{\nabla}u + w\,\widetilde{\Delta}u.
\end{equation*}
Dividing \eqref{eq:helmholtz_tilde} by $w$ gives the equation for $u$:
\begin{equation}\label{eq:u_prelog}
  \widetilde{\Delta}u + 2\,\frac{\widetilde{\nabla}w}{w}\cdot \widetilde{\nabla}u
  + \frac{\widetilde{\Delta}w}{w}\,u + k^2 u  = 0.
\end{equation}
Using the identity
$\widetilde{\Delta}w/w=\widetilde{\Delta}(\ln w)+
\widetilde{\nabla}(\ln w)\cdot\widetilde{\nabla}(\ln w)$,
\begin{equation}\label{eqn:rescaled}
  \widetilde{\Delta}u
  + 2\,\widetilde{\nabla}(\ln w) \cdot \widetilde{\nabla}u
  + \left(\widetilde{\Delta}(\ln w) + \widetilde{\nabla}(\ln w) \cdot\widetilde{\nabla}(\ln w) + k^2\right)u
  = 0.
\end{equation}

\subsection{Compactification of the rescaled Helmholtz equation}\label{sec:compactification_rescaling}
We now apply the compactification introduced in Sec.~\ref{sec:compactification} to the rescaled Helmholtz equation \eqref{eqn:rescaled}. For a scalar field $\tilde{f}(\tilde{x})$ we denote its pullback by $f(x) := \tilde{f}(\tilde{x}(x))$. We use the standard transformation rules
\begin{equation}\label{eq:trafo_rules}
  \widetilde{\nabla}\tilde{f} = J^{-\top}\nabla f,
  \qquad
  \det J \,\widetilde{\nabla}\cdot \tilde{v}
  = \nabla\cdot\left(\det J \,J^{-1} v\right),
  \qquad
  \det J\,\widetilde{\Delta}\tilde{f}
  = \nabla\cdot\left(M^{-1}\nabla f\right),
\end{equation}
where all quantities on the right-hand side are evaluated at $x$ and $v(x):=\tilde{v}(\tilde x(x))$.

Multiplying \eqref{eqn:rescaled} by $\det J$ and applying the transformation rules \eqref{eq:trafo_rules} gives
\begin{equation}\label{eqn:g_trafo}
  \nabla\cdot\left(M^{-1}\nabla u\right)
  + 2\,(\nabla\ln w)^\top M^{-1}\nabla u
  + \left[
      \nabla\cdot\left(M^{-1}\nabla\ln w\right)
      + (\nabla\ln w)^\top M^{-1}\nabla\ln w
      + (\det J)\,k^2
    \right]u = 0.
\end{equation}
To obtain an explicitly regular expression at infinity, we multiply the equation by $\Omega^{d-1}$ and use $A := \Omega^{d-1}\,M^{-1}$, which extends regularly to the compactification boundary as discussed in Sec.~\ref{sec:compactification}. Using the identity
\begin{equation}\label{eq:div_identity}
  \Omega^{d-1}\,\nabla\cdot\left(M^{-1}X\right)
  = \nabla\cdot\left(A X\right) - (d-1)\,(\nabla\ln\Omega)^\top A X,
\end{equation}
with $X=\nabla u$ and $X=\nabla\ln w$, and the determinant formula $\det J=\Omega^{-(d+1)}L$ from \eqref{eq:detJ}, we write the rescaled equation as
\begin{equation}\label{eqn:transformed_helmholtz}
  \nabla\cdot\left(A\,\nabla u\right)
  + \beta^\top\nabla u + C u = 0,
\end{equation}
where
\begin{equation}\label{eqn:BC}
  \beta = A\nabla\ln\frac{w^2}{\Omega^{d-1}},
  \qquad
  C = \nabla\cdot\left(A\nabla\ln w\right) +
  \left(\nabla\ln\frac{w}{\Omega^{d-1}}\right)^\top
  A\nabla\ln w + \frac{k^2L}{\Omega^2}.
\end{equation}
The last term in $C$ is generally singular at $\scri=\{\Omega=0\}$ unless the phase of the weight $w$ is chosen to cancel it to leading order, which we discuss next.

\subsubsection{Regularity at infinity}\label{sec:regularity}
Under a suitable choice of weight $w$, we can show that the resulting transformed equation has desirable properties at the compactified boundary. In this section, we develop some intuition for the choice of the weight and present the coefficients for a few examples.  

The weight $w$ can be interpreted as a combination of time transformation and conformal rescaling \cite{Zenginoglu2011HyperboloidalLayers, Zenginoglu2026NullInfinityLayer, zenginouglu2026penrose}. It is chosen so that the transformed coefficients extend regularly to the boundary $\scri$ and the unknown has nonoscillatory asymptotics. For brevity of the discussion, we focus on the radial case in the rest of this paper. We have
\begin{equation}\label{eq:grad_r}
  \nabla \rho = n,
  \qquad
  \nabla r = \frac{dr}{d\rho}\,\nabla\rho = \frac{1}{G}\,n,
  \qquad
  \nabla\cdot\left(\psi(\rho)\,n\right) = \psi'(\rho) + \frac{d-1}{\rho}\,\psi(\rho),
\end{equation}
for any radial scalar function $\psi(\rho)$.

For the choice of the weight, we note that the far-field behavior of homogeneous solutions decomposes into incoming and outgoing spherical waves in the exterior domain. We use the time dependence $e^{-ikt}$. With this convention, $e^{+ikr}$ represents an outgoing radial phase and $e^{-ikr}$ an incoming radial phase:
\begin{equation}\label{eq:decomposition}
U(r,\hat x) \sim r^{-\frac{d-1}{2}}\left( e^{-ikr}\,u_\infty^-(\hat x)+e^{+ikr}\,u_\infty^+(\hat x)\right),
\qquad \text{as } \ r\to\infty.
\end{equation}
Here $\hat x\in \mathbb S^{d-1}$ denotes the observation direction, $u_\infty^-(\hat x)$ is the incoming radiation, and $u_\infty^+(\hat x)$ is the far-field of the outgoing scattered field. The decomposition above suggests the weight
\begin{equation}\label{eq:characteristic}
  w(r) = r^{-\frac{d-1}{2}}e^{\pm i k r}.
\end{equation}
The amplitude factor $r^{-(d-1)/2}$ describes the spherical far-field decay. In compactified variables, $r^{-(d-1)/2}=(\Omega(\rho)/\rho)^{(d-1)/2}$, which corresponds to the choice $f(\rho)=1/\rho$ in the more general ansatz \eqref{eq:weight}. Since the compactification center is placed inside the excluded obstacle, $\rho$ is bounded away from zero on the computational domain, and this factor is regular there. Since $w$ depends on $x$ only through $r$, we may differentiate $\ln w$ with respect to $r$ and use \eqref{eq:grad_r}:
\[
  \nabla\ln w
  = \frac{d}{dr}\ln w \,\nabla r
  = \left(\pm i k - \frac{d-1}{2r}\right)\frac{1}{G}\,n
  = \left(\pm i k - \frac{(d-1)\Omega(\rho)}{2\rho}\right)\frac{1}{G}\,n.
\]
Substituting this expression into \eqref{eqn:transformed_helmholtz} and exploiting the radial form $A = L\,P + G\,Q$ from \eqref{eq:Aproj} yields the reduced equation
\begin{equation}\label{eq:char_eq}
  \nabla\cdot\left(A\,\nabla u\right)
  + \left(\pm 2 i k - \frac{(d-1)G}{\rho}\right)\,n^\top\nabla u
  + \frac{(d-1)(3-d)}{4\rho^2}\,L\,u
  = 0.
\end{equation}
The equation extends regularly to the compactified boundary. The $k^2$-contribution in \eqref{eqn:transformed_helmholtz} cancels under the choice \eqref{eq:characteristic}, leaving only a \emph{linear} dependence on $k$ in the first-order transport term. The choice \eqref{eq:characteristic} is the radial phase-amplitude factor underlying classical conjugated infinite elements. The hyperboloidal layer used below retains the same outgoing phase behavior at infinity but allows more flexible choices in the interior of the layer. Geometrically, this choice corresponds to a characteristic time transformation combined with a conformal rescaling of the field.

We can make more general choices of the time transformation. The
weight need not agree with the outgoing characteristic factor
\eqref{eq:characteristic} throughout the layer; it is sufficient that
this agreement holds asymptotically at the compactified boundary. We
therefore employ the ansatz
\begin{equation}\label{eq:weight}
  w(\rho)
  =
  \left[f(\rho)\Omega(\rho)\right]^{\frac{d-1}{2}}
  e^{i k h(r(\rho))},
\end{equation}
where $h=h(r)$ is a real-valued radial height function of the
physical radius $r(\rho)$. The derivative of $h$, pulled back to the compactified layer, is the corresponding boost function.

\begin{definition}[Height and boost functions]\label{def:boost_function}
Let $\Omega$ be a radial boundary defining function as in
Sec.~\ref{sec:radial_compactification}, and let
\[
  r(\rho)=\frac{\rho}{\Omega(\rho)}, \qquad 0\le \rho<S .
\]
Let $[R,S]$ denote the compactified layer. A height function for the layer is a real-valued radial function $h=h(r)$ with $h(r)=0$ for $0\le r\le R$. Its associated boost function is defined by
\[
  H(\rho)
  :=
  \left.\frac{dh}{dr}\right|_{r=r(\rho)},
  \qquad R<\rho<S .
\]
We require that $H$ extends to the compactified boundary and satisfies
\[
  H(S)=1,
  \qquad
  \frac{1-H^2}{G}\in L^\infty(R,S),
\]
where $H(S)$ denotes the boundary value of this extension and $G$ is
the compactification factor associated with $\Omega$.
\end{definition}

The terms height and boost come from the hyperboloidal
literature in numerical relativity and refer to time transformations \cite{Zenginoglu2011HyperboloidalLayers}.
The condition $H(S)=1$ makes the layer asymptotically
characteristic. The boundedness of $(1-H^2)/G$ is the regularity
condition needed at the compactified boundary. In the finite-element
formulation below, we use height functions that are continuous and
piecewise smooth. Since $h\equiv0$ and $\Omega\equiv 1$ in the inner region $0\le r\le R$, the transformation is confined to the layer.

All primes below denote differentiation with respect to the compactified radius $\rho$. Using \eqref{eq:grad_r} and $\nabla\ln\Omega = (\ln\Omega)'\,n$ on the layer, we obtain
\begin{equation}\label{eq:grad_ln_w}
  \nabla\ln w
  = i k\,H(\rho)\,\nabla r + \frac{d-1}{2}\,\nabla(\ln f \Omega)
  = \left( i k\,\frac{H}{G} + \frac{d-1}{2}\,(\ln f \Omega)'\right)n.
\end{equation}
With this choice, the first-order term in \eqref{eqn:transformed_helmholtz} simplifies substantially because
\[
  \nabla\ln\left(\frac{w^2}{\Omega^{d-1}}\right)=(d-1)\nabla\ln f+2ik\nabla h,
\]
and $A\nabla h=Hn$ (using $An=Gn$ and $\nabla h=(H/G)n$). Denoting $\sigma:=(\ln f)'$, we get 
\[ 
  \beta=A\nabla\ln(w^2/\Omega^{d-1})=(2ikH+(d-1)G\sigma)n=:Bn,
\] 
and
\begin{align}\label{eq:C}
  C
  &=
  k^2\,\frac{1-H^2}{G}
  + i k\left(H' + \frac{d-1}{\rho}H\right)
  + \frac{d-1}{4\rho^2}\left[(3-d)(L-G) + 2\rho\, G'\right] \nonumber \\
  & + i k (d-1) \sigma H + \frac{(d-1)}{2} \left[ (\sigma G)' + \frac{(d-1)\sigma G}{2\rho} (2+\rho \sigma) \right].
\end{align}
For $f=1$, we have $\sigma=0$, $\beta=2ikHn$, and
\[
  C
  =
  k^2\frac{1-H^2}{G}
  +ik\left(H'+\frac{d-1}{\rho}H\right)
  +R_\Omega,
  \qquad
  R_\Omega
  =
  \frac{d-1}{4\rho^2}
  \left[(3-d)(L-G)+2\rho G'\right].
\]
For $f=1/\rho$, we have $\sigma=-1/\rho$, and the coefficient simplifies to
\[
  \beta=\left(2ikH-\frac{(d-1)G}{\rho}\right)n,
  \qquad
  C
  =
  k^2\frac{1-H^2}{G}
  +ikH'
  +\frac{(d-1)(3-d)}{4\rho^2}L .
\]
Thus the exact spherical amplitude $f=1/\rho$ gives the reduced equation \eqref{eq:char_eq} when $H=\pm1$. For the finite-element weak formulation later, we use the simpler choice $f=1$.

Among the terms above, the only potentially singular contribution at the compactification boundary $\scri=\{\Omega=0\}$ is $k^2\,\frac{1-H^2}{G}$. The boundedness requirement in Definition~\ref{def:boost_function} is equivalently the local condition
\begin{equation}\label{eq:H_condition}
  1-H^2=\mathcal O(G)
  \qquad\text{as }\rho\to S.
\end{equation}
For a boundary defining function with a simple zero at $S$, we have $G(\rho)=\mathcal O((S-\rho)^2)$. Therefore \eqref{eq:H_condition} is stronger than the pointwise outgoing condition $H(S)=1$. Since $H$ is smooth on the layer, a sufficient local condition is $H(S)=1$ and $H'(S)=0$. In the numerical constructions below we impose the stronger asymptotic condition $H=1+\mathcal O(G)$, for example by taking $H=1-G$ in the compactification layer.

\subsubsection{The far-field pattern}
Among the main advantages of compactification is that the far-field pattern can be extracted from the global solution without post-processing or extrapolation. When $H=1+\mathcal O(G)$ near $\scri$, the height function satisfies $h(r)=r+h_0+o(1)$ for some constant $h_0$ as $r\to\infty$. If the outgoing physical field has far-field pattern $U_\infty(\hat x)$, in the sense that
\[
  U(r,\hat x)
  =
  r^{-\frac{d-1}{2}}e^{ikr}
  \left(U_\infty(\hat x)+o(1)\right),
  \qquad r\to\infty,
\]
then the compactified unknown satisfies (for the choice $f=1$)
\[
  u|_{\scri}(\hat x)
  =
  S^{-(d-1)/2}e^{-ikh_0}U_\infty(\hat x).
\]
The far-field pattern is obtained from the trace at $\scri$ by an explicit constant factor. This formula uses the far-field convention specified by the asymptotic expansion above; alternative scattering conventions, for example those including dimension-dependent constants or phase shifts, require the corresponding constant conversion.

\begin{remark}[Radiation condition and regularity at $\scri$]
  Under the assumption $H=1+\mathcal O(G)$ near $\scri$, an outgoing term $r^{-(d-1)/2}e^{ikr}U_\infty^+(\hat x)$ is transformed into a function with a finite trace at $\scri$. By contrast, an incoming term $r^{-(d-1)/2}e^{-ikr}U_\infty^-(\hat x)$ produces a rapidly oscillatory factor proportional to $e^{-ik(r(\rho)+h(r(\rho)))}$, whose $\rho$-derivative grows like $1/G$ near $\scri$. Since $G=O((S-\rho)^2)$ for a simple zero of $\Omega$, this incoming contribution is not in $H^1(\mathcal D)$ unless $U_\infty^-=0$. Therefore, the compactified $H^1(\mathcal D)$ trial space excludes the incoming branch and selects the outgoing branch.
\end{remark}

\section{Variational formulation}\label{sec:var}
The strong form of the transformed equation is given in \eqref{eqn:transformed_helmholtz} with the definitions \eqref{eq:Aproj} and \eqref{eqn:BC}. We now provide an independent derivation of the variational formulation for the transformed Helmholtz equation used in the finite element discretization. 
The strong form is used on the smooth compactification layer, while the weak form below is the global statement on $\mathcal D$ with the identity transformation in the noncompactified interior.

The formulation is obtained from the physical weak form before collecting all lower-order terms into the strong-form coefficient $C$. This is the natural form for computation because it keeps the amplitude rescaling in a symmetric first-order form and leaves only bounded coefficients at the compactified boundary.

From this point on we work using the radial compactification of
Sec.~\ref{sec:radial_compactification}. We use the corresponding radial
boundary defining function and an admissible radial height function with boost as in Definition~\ref{def:boost_function}.
We state the theorem for homogeneous Dirichlet data on the obstacle, so that the test functions vanish on $\partial\mathcal O$. With the interior extensions $h=0$ and $\Omega=1$ for $\rho\le R$, the compactified unknown agrees with the physical unknown near the obstacle. Neumann or impedance data may therefore be imposed through the standard physical boundary terms on $\partial\mathcal O$.

\begin{theorem}[Consistency and boundedness]\label{thm:compactified_weak_form}
  Let $\mathcal O,\mathcal D$ be as above, $\Omega,L,G$ be a radial compactification as in Sec.~\ref{sec:radial_compactification} and $H,h$ be as in Definition \ref{def:boost_function}. Moreover let $U$ be the solution to the homogeneous Helmholtz equation \eqref{eq:helmholtz} with Dirichlet data $U_0$ at $\partial \mathcal O$ and Sommerfeld radiation condition such that the pull back $u$ of the rescaled physical solution given by 
$$
  U(\tilde x(x)) =
  \Omega(\rho)^{\frac{d-1}{2}} e^{ik h(r(\rho))}u(x),
$$
  is in $H^1(\mathcal D)$. Then $u$ is a solution to the weak formulation to find $u\in\mathcal V_1 := \{v\in H^1(\mathcal D): \mathrm{tr}|_{\partial\mathcal O}v=U_0\}$ such that
$$
  a(u,v)=0 \qquad \forall v\in\mathcal V_0:= \{v\in H^1(\mathcal D): \mathrm{tr}|_{\partial\mathcal O}v=0\},
$$

%
where
\begin{align}
  \begin{split}
  a(u,v) :={}&
  -\int_{\mathcal D} \nabla u\cdot (GQ+LP)\nabla \bar v
  + k^2\int_{\mathcal D} \frac{1-H^2}{G}\,u\bar v
  -\int_{\mathcal D} \frac{1}{L}
    \left(\frac{d-1}{2}\Omega'\right)^2 u\bar v \\
  &-ik \int_{\mathcal D}
    H\,n\cdot\left(u\nabla \bar v-\bar v \nabla u\right)
   -\int_{\mathcal D}
    \frac{\Omega}{L}\left(\frac{d-1}{2}\Omega'\right)
    n\cdot\left( u \nabla \bar v+\bar v \nabla u\right)
   +ik\int_{\scri} u\bar v .
\end{split}
\label{eq:compactified_weak_radial_full}
\end{align}
  All coefficients in \eqref{eq:compactified_weak_radial_full} are bounded on $\mathcal D$. Moreover, the sesquilinear form $a$ is bounded on $\mathcal H^1(\mathcal D)\times\mathcal H^1(\mathcal D)$: there exists a constant $C_a=C_a(k,\Omega,H,\mathcal D)>0$ such that
\[
  |a(u,v)|
  \le
  C_a
  \|u\|_{H^1(\mathcal D)}
  \|v\|_{H^1(\mathcal D)}
  \qquad
  \forall u,v\in\mathcal H^1(\mathcal D).
\]
\end{theorem}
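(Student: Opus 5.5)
Our approach is to derive the weak form from the physical weak formulation instead of from the strong form \eqref{eqn:transformed_helmholtz}, because the boundary term on $\scri$ only becomes visible in the physical picture. Fix $v\in\mathcal V_0$ and first take it smooth, with support meeting $\scri$ only away from $\partial\mathcal O$. Introduce the physical test function $V:=\overline{\bar w}^{-1}$-type conjugate weight, concretely $\tilde V(\tilde x(x))=\Omega^{\frac{d-1}{2}}e^{-ikh}v(x)$, so that $U\bar V=\Omega^{d-1}u\bar v$. Since $V$ does not have compact support in $\mathcal O^c$, we truncate at the physical radius $r_\star$, i.e.\ $\rho_\star<S$. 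On $\{r<r_\star\}$, Green's formula for \eqref{eq:helmholtz} gives
\[
-\int_{r<r_\star}\widetilde\nabla U\cdot\widetilde\nabla\bar V+k^2\int_{r<r_\star}U\bar V+\int_{r=r_\star}\partial_rU\,\bar V\,ds=0 .
\]
The obstacle term vanishes because $v=0$ there.

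Next we pull back the volume terms with \eqref{eq:trafo_rules}, using $\widetilde\nabla=J^{-\top}\nabla$ and $\det J\,d\tilde x$. This gives $\nabla(wu)\cdot M^{-1}\nabla(\bar w^{-1}\ldots)$. Expanding with $w=\Omega^{\frac{d-1}{2}}e^{ikh}$, $\nabla\ln w=(ikH/G+\frac{d-1}{2}\Omega'/\Omega)n$ and $A=\Omega^{d-1}M^{-1}=LP+GQ$ by \eqref{eq:Aproj}, the factor $\Omega^{d-1}$ cancels against $|w|^2$. The resulting terms are:
\begin{itemize}
\item the principal term $-\nabla u\cdot A\nabla\bar v$;
\item cross terms $-ikH\,n\cdot(u\nabla\bar v-\bar v\nabla u)$, which are antisymmetric because the phase of the test weight is conjugated;
\item amplitude cross terms $-\frac{d-1}{2}\frac{G\Omega'}{\Omega}n\cdot(u\nabla\bar v+\bar v\nabla u)$, where $G/\Omega=\Omega/L$ matches the stated coefficient;
\item zeroth-order terms $-\big(k^2H^2/G+\frac{G}{\Omega^2}(\frac{d-1}{2}\Omega')^2\big)u\bar v$.
\end{itemize}
The mixed imaginary parts cancel in the zeroth-order terms. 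The $k^2$ mass term pulls back to $k^2\det J\,\Omega^{d-1}=k^2L/\Omega^2=k^2/G$, and combining gives $k^2(1-H^2)/G$. Also $G(\Omega'/\Omega)^2=(\Omega')^2/L$. This reproduces every volume integrand of \eqref{eq:compactified_weak_radial_full} on $\{\rho<\rho_\star\}$, and the computation is purely algebraic.

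The main obstacle is the limit $\rho_\star\to S$ of the sphere term. We write
\[
\partial_rU\,\bar V\,r^{d-1}=\big(\partial_rU-ikU\big)\bar V r^{d-1}+ikU\bar Vr^{d-1}.
\]
On $r=r_\star$ we have $U\bar V r^{d-1}\,d\hat x=\rho^{d-1}u\bar v\,d\hat x$, which is the surface measure on $\{\rho=\rho_\star\}$. This converges to $ik\int_{\scri}u\bar v$ by continuity of traces of $H^1$ functions on concentric spheres (for smooth $v$, and $u\in H^1$, using radial traces). For the first piece, $|\bar V|\lesssim r^{-(d-1)/2}$ because $\Omega^{(d-1)/2}\sim r^{-(d-1)/2}\rho^{(d-1)/2}$ and $v$ is bounded. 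Sommerfeld \eqref{eq:sommerfeld} then gives $r^{\frac{d-1}{2}}(\partial_rU-ikU)\to0$ uniformly, so this piece tends to zero. The volume integrals converge by dominated convergence once the coefficients are shown to be bounded. Finally, density of such smooth $v$ in $\mathcal V_0$ together with boundedness of $a$ extends the identity to all $v\in\mathcal V_0$, and $u\in\mathcal V_1$ holds since $w=1$ near $\partial\mathcal O$.

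For boundedness, the coefficients $G,L,H$ are continuous on $[R,S]$ with $L>0$, so $1/L$, $\Omega/L$ and $\Omega'$ are bounded. The quantity $(1-H^2)/G\in L^\infty$ holds by Definition~\ref{def:boost_function}, and in $B_R$ all coefficients are constants ($G=L=1$, $\Omega'=0$, $h=0$). Cauchy--Schwarz bounds each volume term by $\|u\|_{H^1}\|v\|_{H^1}$, and the trace theorem $\|u\|_{L^2(\scri)}\le C\|u\|_{H^1(\mathcal D)}$ bounds the boundary term. This yields $C_a$.
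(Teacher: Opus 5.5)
\paragraph{Verdict.} Your proposal is correct and has the same overall structure as the paper's proof. You truncate, test the physical weak form with a weighted test function, pull back with $A=\Omega^{d-1}M^{-1}=LP+GQ$, expand, and let the truncation sphere tend to $\scri$. Boundedness then follows from Cauchy--Schwarz, the trace theorem and density.

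\paragraph{A sign slip in the test function.} The test function must carry the \emph{same} weight as $U$, namely $V=\Omega^{\frac{d-1}{2}}e^{+ikh}v$, so that $\bar V=\Omega^{\frac{d-1}{2}}e^{-ikh}\bar v$. With $V=\Omega^{\frac{d-1}{2}}e^{-ikh}v$ as you wrote it, you would get $U\bar V=\Omega^{d-1}e^{2ikh}u\bar v$, and the $k^2$ terms would combine to the singular $(1+H^2)/G$. All your later computations already use the correct choice: the antisymmetric $ikH$ term, the cancellation of the mixed terms, $k^2(1-H^2)/G$, and $U\bar V r^{d-1}=\rho^{d-1}u\bar v$.

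\paragraph{Where your route differs: the limit of the sphere term.} The paper pulls the conormal term back to $\Sigma_\varepsilon$ as
\[
\int_{\Sigma_\varepsilon}\Bigl(G\,\partial_\rho u+\frac{(d-1)\Omega\Omega'}{2L}\,u+ikHu\Bigr)\bar v .
\]
It then removes the first two pieces using $G(S)=\Omega(S)=0$ and sets $H(S)=1$ in the third. As written, this tacitly requires $G\,\partial_\rho u\to0$ on $\Sigma_\varepsilon$, i.e.\ more regularity of $u$ at $\scri$ than $H^1$; the derivation is stated for smooth $u$. It also never invokes the Sommerfeld condition. The argument can be repaired with $u\in H^1$ alone by passing to a sequence $\varepsilon_j\downarrow0$ along which $\varepsilon_j\|\partial_\rho u\|^2_{L^2(\Sigma_{\varepsilon_j})}\to0$.

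You instead split off $\partial_rU-ikU$. Pulled back, this is exactly
\[
\Omega^{\frac{d-1}{2}}e^{ikh}\Bigl(G\,\partial_\rho u+\frac{(d-1)\Omega\Omega'}{2L}\,u+ik(H-1)u\Bigr),
\]
the same combination the paper kills term by term. You dispose of it with the uniform Sommerfeld condition and $|V|\lesssim r^{-(d-1)/2}$. You treat the remaining $ik\int_{\rho=\rho_\star}u\bar v$ by $L^2$-continuity of traces on concentric spheres.

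\paragraph{What each route buys.} Your version works under exactly the theorem's hypotheses ($u\in H^1(\mathcal D)$ and Sommerfeld). It keeps $u$ fixed as the actual solution and smooths only $v$, so the final density step is just continuity of $a(u,\cdot)$ on $\mathcal V_0$. You also check $u\in\mathcal V_1$ explicitly, which the paper leaves implicit. The paper's version stays entirely in compactified variables. It shows the intrinsic mechanism, the degenerate conormal $A\nu=Gn$ together with $H(S)=1$, by which the boundary mass term $ik\int_{\scri}u\bar v$ arises with the radiation condition encoded in regularity at $\scri$.
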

\begin{remark}
No essential boundary condition is imposed at $\scri$; the only outer-boundary contribution is the phase term
\[
  ik\int_{\scri}u\bar v .
\]
\end{remark}
\begin{remark}
  Since for $\rho<R$ we have $G=L=1,H=0,\Omega'=0$ we immediately obtain that for $v$ with compact support in $B_R(0)\setminus\mathcal O$ the weak formulation is 
  \begin{align*}
    -\int_{B_R(0)\setminus\mathcal O}\nabla u\cdot \nabla v+k^2\int_{B_R(0)\setminus\mathcal O}uv = 0,
  \end{align*}
  i.e., the volume terms of the classical weak form of the Helmholtz equation.
\end{remark}

\begin{proof}
We first derive the formula for functions that are smooth on the noncompactified interior and on the compactification layer, and then extend the resulting sesquilinear form to $H^1(\mathcal D)$ by density. Set $\alpha:=(d-1)/2$. For $\varepsilon>0$, let $\mathcal D_\varepsilon:=\{x\in\mathcal D:\rho<S-\varepsilon\}$, $\widetilde{\mathcal D}_\varepsilon:=\tilde x(\mathcal D_\varepsilon)$,
  and denote the artificial outer boundary by $\Sigma_\varepsilon:=\{x\in\mathcal D:\rho=S-\varepsilon\}$. On the truncated physical domain $\widetilde{\mathcal D}_\varepsilon$, testing with a smooth function $V$ with $\mathrm{tr}|_{\partial O}=0$, integration and integration by parts for $(\widetilde\Delta+k^2)U=0$ gives
\[
  -\int_{\widetilde{\mathcal D}_\varepsilon}
       \widetilde\nabla U\cdot\widetilde\nabla\overline V
  +k^2\int_{\widetilde{\mathcal D}_\varepsilon}U\overline V
  +\int_{\partial\widetilde{\mathcal D}_\varepsilon}
       \partial_{\widetilde\nu}U\,\overline V
  =0 .
\]
Insert
\[
  U=\Omega^{\alpha}e^{ikh}u,
  \qquad
  V=\Omega^{\alpha}e^{ikh}v,
\]
and pull the integrals back by $\tilde x=x/\Omega(\rho)$. Using \eqref{eq:detJ} and $G=\Omega^2/L$, we have $\det J=1/\left(G\Omega^{d-1}\right)$. For convenience we define
\[
  p:=\nabla\ln\Omega^\alpha
  =
  \frac{\alpha\Omega'}{\Omega}n,
  \qquad
  q:=\nabla h
  =
  \frac{H}{G}n,
  \qquad
  \mu:=\frac{\Omega}{L}\alpha\Omega' .
\]
Then
\begin{equation}\label{eq:properties_pqmu}
  Ap=\mu n,
  \qquad
  Aq=Hn,
  \qquad
  p\cdot Ap=\frac{1}{L}(\alpha\Omega')^2,
  \qquad
  q\cdot Aq=\frac{H^2}{G}.
\end{equation}
The volume part of the pulled-back weak form is
\[
\begin{aligned}
  I_\varepsilon
  :={}&
  -\int_{\mathcal D_\varepsilon}
  \left(\nabla u+up+ik\,uq\right)
  \cdot A
  \left(\nabla\bar v+\bar vp-ik\,\bar vq\right)
  +k^2\int_{\mathcal D_\varepsilon}\frac{1}{G}u\bar v .
\end{aligned}
\]
Expanding the product gives
\[
\begin{aligned}
  I_\varepsilon
  ={}&
  -\int_{\mathcal D_\varepsilon}\nabla u\cdot A\nabla\bar v
  -\int_{\mathcal D_\varepsilon}
    \left(
      u\,p\cdot A\nabla\bar v
      +
      \bar v\,\nabla u\cdot Ap
    \right) \\
  &-\int_{\mathcal D_\varepsilon}u\bar v\,p\cdot Ap
  +ik\int_{\mathcal D_\varepsilon}
      \bar v\,\nabla u\cdot Aq
  -ik\int_{\mathcal D_\varepsilon}
      u\,q\cdot A\nabla\bar v \\
  &+ik\int_{\mathcal D_\varepsilon}
      u\bar v\left(p\cdot Aq-q\cdot Ap\right)
  -k^2\int_{\mathcal D_\varepsilon}
      u\bar v\,q\cdot Aq
  +k^2\int_{\mathcal D_\varepsilon}\frac{1}{G}u\bar v .
\end{aligned}
\]
The mixed amplitude-phase term vanishes because $p\cdot Aq=q\cdot Ap$. Using \eqref{eq:Aproj} and \eqref{eq:properties_pqmu}, we obtain
\[
\begin{aligned}
  I_\varepsilon
  ={}&
  k^2\int_{\mathcal D_\varepsilon}
    \frac{1-H^2}{G}\,u\bar v
  -\int_{\mathcal D_\varepsilon}
    \nabla u\cdot (GQ+LP)\nabla\bar v
  -\int_{\mathcal D_\varepsilon}
    \frac{1}{L}(\alpha\Omega')^2u\bar v \\
  &-ik\int_{\mathcal D_\varepsilon}
    H\,n\cdot\left(u\nabla\bar v-\bar v\nabla u\right)
  -\int_{\mathcal D_\varepsilon}
    \frac{\Omega}{L}\alpha\Omega'\,
    n\cdot\left(u\nabla\bar v+\bar v\nabla u\right).
\end{aligned}
\]
This is the volume contribution in
\eqref{eq:compactified_weak_radial_full}, restricted to
$\mathcal D_\varepsilon$.

We now compute the boundary contribution. The physical conormal term pulls back to
\[
  B_\varepsilon :=
  \int_{\partial\mathcal D_\varepsilon}
  \bar v
  \left[ A\nabla u +uAp +ik\,uAq
  \right]\cdot\nu 
  =
  \int_{\partial\mathcal D_\varepsilon}
    \bar v\,\nabla u\cdot A\nu
  +\int_{\partial\mathcal D_\varepsilon}
    \mu\,n\cdot\nu\,u\bar v
  +ik\int_{\partial\mathcal D_\varepsilon}
    H\,n\cdot\nu\,u\bar v ,
\]
where $\nu$ is the outward unit normal to $\mathcal D_\varepsilon$. On the obstacle boundary $\partial\mathcal O$, the test function satisfies $v|_\partial\mathcal O=0$, so this contribution vanishes in the homogeneous Dirichlet case. On $\Sigma_\varepsilon$, we have $\nu=n$, and hence $A\nu=An=Gn$. Therefore
\[
  B_\varepsilon
  =
  \int_{\Sigma_\varepsilon}
    G\,\partial_\rho u\,\bar v
  +\int_{\Sigma_\varepsilon}
    \frac{\Omega}{L}\alpha\Omega'\,u\bar v
  +ik\int_{\Sigma_\varepsilon}
    H\,u\bar v ,
\]
where $\partial_\rho u=n\cdot\nabla u$. The boundary term simplifies considerably since $G(S)=0=\Omega(S)$, $H(S)=1$ and $L>0$. We have
\[
  B_\varepsilon
  \longrightarrow
  ik\int_{\scri}u\bar v
  \qquad
  \text{as }\varepsilon\downarrow0 .
\]
Letting $\varepsilon\downarrow0$ gives
\eqref{eq:compactified_weak_radial_full} for functions that are smooth on the interior and on the layer.

It remains to verify boundedness.
By the radial compactification assumptions, $\Omega$, $\Omega'$, $L$, and $G$
are bounded on the layer $[R,S]$, and the lower bound on $L$ gives
$L^{-1}\in L^\infty(R,S)$.
In the noncompactified interior these coefficients are constant. The compactification center lies inside the obstacle, hence $\rho\ge\rho_0>0$ on $\mathcal D$. All coefficients in the remaining terms are bounded on $\mathcal D$; the only
coefficient requiring a separate assumption is $(1-H^2)/G$, which is bounded by Definition~\ref{def:boost_function}. Therefore, Cauchy--Schwarz gives
\[
\begin{aligned}
  |a(u,v)|
  \le{}&
  C_1\|u\|_{L^2(\mathcal D)}\|v\|_{L^2(\mathcal D)}
  +C_2\|\nabla u\|_{L^2(\mathcal D)}
       \|\nabla v\|_{L^2(\mathcal D)} \\
  &+C_3
  \left(
    \|u\|_{L^2(\mathcal D)}\|\nabla v\|_{L^2(\mathcal D)}
    +
    \|v\|_{L^2(\mathcal D)}\|\nabla u\|_{L^2(\mathcal D)}
  \right) 
  +|k|\,\|u\|_{L^2(\scri)}\|v\|_{L^2(\scri)} ,
\end{aligned}
\]
where the constants $C_j$ depend only on $k$, $\Omega$, $H$, and
$\mathcal D$. Since $\mathcal D$ is a bounded Lipschitz domain, the trace
theorem gives
\[
  \|w\|_{L^2(\scri)}
  \le
  C_{\mathrm{tr}}\|w\|_{H^1(\mathcal D)}
  \qquad
  \forall w\in H^1(\mathcal D).
\]
Therefore there exists $C_a=C_a(k,\Omega,H,\mathcal D)>0$ such that
\[
  |a(u,v)|
  \le
  C_a
  \|u\|_{H^1(\mathcal D)}
  \|v\|_{H^1(\mathcal D)}
  \qquad
  \forall u,v\in \mathcal H^1(\mathcal D).
\]
The formula was derived for smooth functions. By the density of smooth functions satisfying the homogeneous trace condition on $\partial\mathcal O$ in $\mathcal V_0$, and by the boundedness just proved, the sesquilinear form extends uniquely and continuously to $\mathcal H^1(\mathcal D)\times\mathcal H^1(\mathcal D)$. Thus the compactified weak formulation is regular up to $\scri$, and the only outer-boundary contribution is the phase term
\[
  ik\int_{\scri}u\bar v .
\]
\end{proof}

\section{Numerical experiments}\label{sec:numerics}
We present a series of numerical experiments to demonstrate the applicability of our method.
The coefficient functions
$L$, $G$, $\Omega'$, and $H$ are evaluated pointwise on the compactified mesh.
No Dirichlet, Neumann, or absorbing boundary condition is imposed at $\scri$;
the compactified boundary contributes only the boundary mass term
$ik\int_{\scri}u\bar v$. The far-field pattern is then recovered from the trace
of $u$ on $\scri$ using the formula in Sec.~\ref{sec:compactification_rescaling}.

Except for the manufactured benchmark in Sec.~\ref{sec:sphere}, the experiments use the standard sound-soft scattering convention. For the disk, trapping geometry, and submarine, we compute the outgoing scattered field \(U^{\mathrm{s}}\) with boundary data \(U^{\mathrm{s}}=-U^{\mathrm{i}}\) on \(\partial\mathcal O\). In Sec.~\ref{sec:sphere}, by contrast, we prescribe Dirichlet data from a known outgoing solution so that the discretization error can be measured directly. The compactified scattered unknown is denoted by $u^{\mathrm{s}}$. The physical total field is $U^{\mathrm{tot}}=U^{\mathrm{i}}+U^{\mathrm{s}}$, and its compactified counterpart is denoted by $u^{\mathrm{tot}}$.

For all hyperboloidal compactification runs, we choose $R$ such that $B_R(0)\supset\mathcal O$ and define the layer coefficients
\[
  \Omega(\rho):=\frac{S-\rho}{S-R},
  \qquad
  H(\rho):=1-G(\rho),
  \qquad R\le \rho\le S .
\]
On the noncompactified interior $\rho<R$, we use the extensions $\Omega=1$ and $H=0$. This gives a characteristic-preserving setting.
The compactified domain is thus naturally decomposed into the disjoint sets $\mathcal D_{int}=B_R(0)\setminus\mathcal O$ and the compactification layer $\mathcal D_{ext}=B_S(0)\setminus B_R(0)$.

We generate simplex meshes with appropriate curved geometry approximation and a given mesh size, fitting the interface $\rho=R$ between the noncompactified interior and the layer.
We use high-order $H^1$-conforming finite element basis functions to discretize the compactified weak formulation \eqref{eq:compactified_weak_radial_full}.
All experiments were implemented in the high-order finite element library Netgen/NGSolve \cite{netgen,ngsolve}.

For comparison with perfectly matched layers, we choose a radial PML based on the complex coordinate transformation
\begin{align*}
  \tilde x \mapsto
  \tilde x+\frac{i\sigma_{\mathrm{PML}}}{k}(\|\tilde x\|-R)\frac{\tilde x}{\|\tilde x\|},
\end{align*}
where $\sigma_{\mathrm{PML}}>0$ is the real damping strength. An outgoing wave $e^{ikr}$ is transformed into $e^{ikr}e^{-\sigma_{\mathrm{PML}}(r-R)}$ in the PML layer. We truncate the resulting equation at $\|\tilde x\|=S$, so the truncation error is expected to be of roughly order $e^{-\sigma_{\mathrm{PML}}(S-R)}$.

\subsection{Scattering by a disk}
In the first experiment, we choose a simple geometry to demonstrate convergence of the method. The geometry breaks all rotational symmetry to avoid possible superconvergence while still allowing a semi-analytic reference solution. We also use this setting to compare the convergence of our method with standard PMLs.
We choose the obstacle $\mathcal O:=B_{R_{\mathrm{scat}}}(-0.2,-0.2)$ with $R_{\mathrm{scat}}=1$ and impose the physical incident plane wave $U^{\mathrm{i}}=\exp(ik\tilde x_1)$ for some wavenumber $k>0$ at the boundary of the obstacle $\partial \mathcal O$. The interface radius is $R=1.5$, and mapped infinity is chosen as $S=2$.
A typical mesh and approximate compactified scattered solution $u^{\mathrm{s}}$ are shown in Figure~\ref{fig:circle_sol_hyp}. The corresponding far-field pattern is shown in Figure~\ref{fig:circle_farfield}.
We refer to \ref{sec:ref_sol} for the construction of reference solutions for these problems.

\subsubsection{Convergence of the hyperboloidal compactification}
To study convergence of the method, we apply successive mesh refinement and measure the relative $L^2$ error against the analytic physical scattered field $U^{\mathrm{s}}$ pulled back to $\mathcal D_{int}$ as well as the relative $L^2(\scri)$ error of the far field. Figure~\ref{fig:circle_conv} shows the convergence of the respective errors for $k=10$ with respect to the mesh size. The measured slopes agree with the expected $\mathsf{h}^{p+1}$ behavior for polynomial order $p$.
\begin{figure}[h!]
  \centering
  \begin{subfigure}[t]{0.45\textwidth}
    \includegraphics[width=\textwidth]{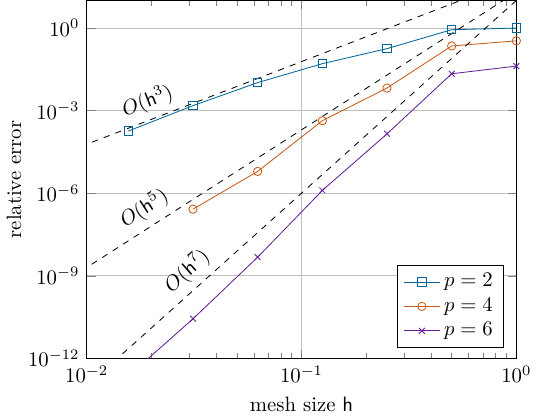}
    \caption{Relative $L^2$ error in $\mathcal D_{int}$ for various polynomial orders}
     \label{fig:circle_conv_inner}
  \end{subfigure}
  \begin{subfigure}[t]{0.45\textwidth}
    \includegraphics[width=\textwidth]{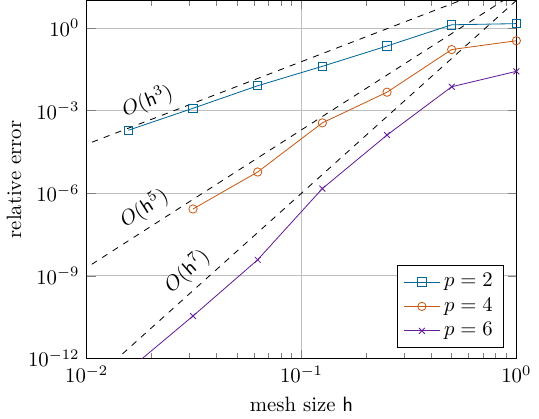}
    \caption{Relative $L^2$ error of the far field for various polynomial orders $p$}
    \label{fig:circle_conv_ff}
  \end{subfigure}
  \caption{Convergence of the hyperboloidal layer for the disk benchmark at $k=10$ for various polynomial orders $p$.}
  \label{fig:circle_conv}
\end{figure}

\subsubsection{Comparison to perfectly matched layers}
To compare the hyperboloidal compactification layer with perfectly matched layers, we use identical configurations of $\mathcal D_{int}$ and $\mathcal D_{ext}$ to define a linear radial perfectly matched layer in $\mathcal D_{ext}$ (cf. \cite{Berenger1994PML,NannenWess2018,Johnson2021PML}). We use identical finite element discretizations for both methods, resulting in complex system matrices with identical dimensions and sparsity patterns and thus similar computational effort.

Figure~\ref{fig:circle_sol_pml} shows a typical PML solution. Note that in $\mathcal D_{ext}$ the PML solution decreases exponentially with the radius and therefore does not lead to a meaningful far-field pattern.

\begin{figure}[h!]
  \centering
  \begin{subfigure}[t]{0.45\textwidth}
    \includegraphics[width=\textwidth]{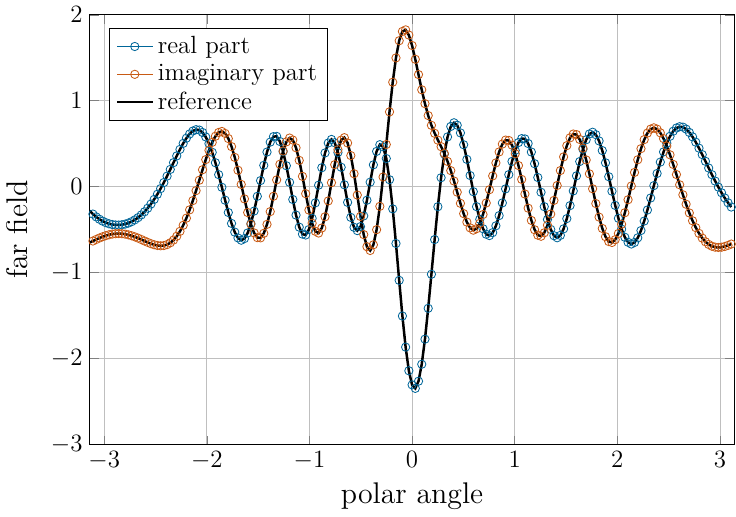}
    \caption{Real and imaginary part of the far field.}
     \label{fig:circle_farfield_val}
  \end{subfigure}
  \begin{subfigure}[t]{0.45\textwidth}
    \includegraphics[width=\textwidth]{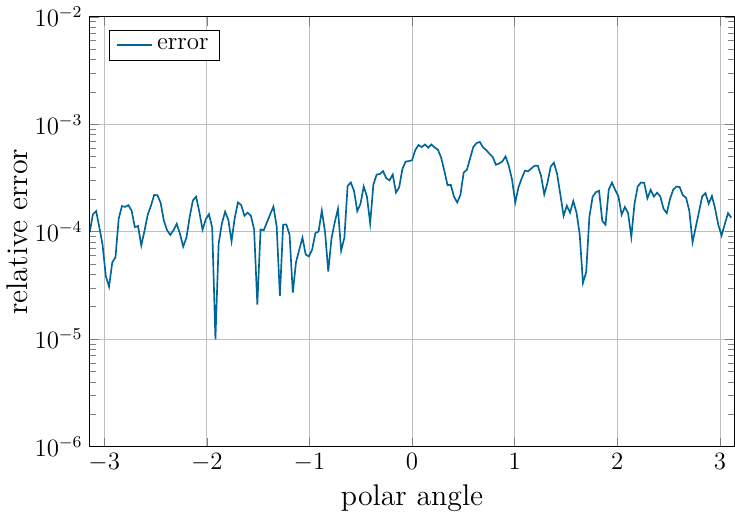}
    \caption{Pointwise error of the far field.}
    \label{fig:circle_farfield_error}
  \end{subfigure}
  \caption{Real part, imaginary part and error of the numerical far field using hyperboloidal compactification for mesh size $0.1$, polynomial order $3$, and wavenumber $k=10$.}
  \label{fig:circle_farfield}
\end{figure}

\begin{figure}[h!]
  \centering
  \begin{subfigure}[t]{0.45\textwidth}
    \includegraphics[width=\textwidth,clip, trim={220 100 20 100}]{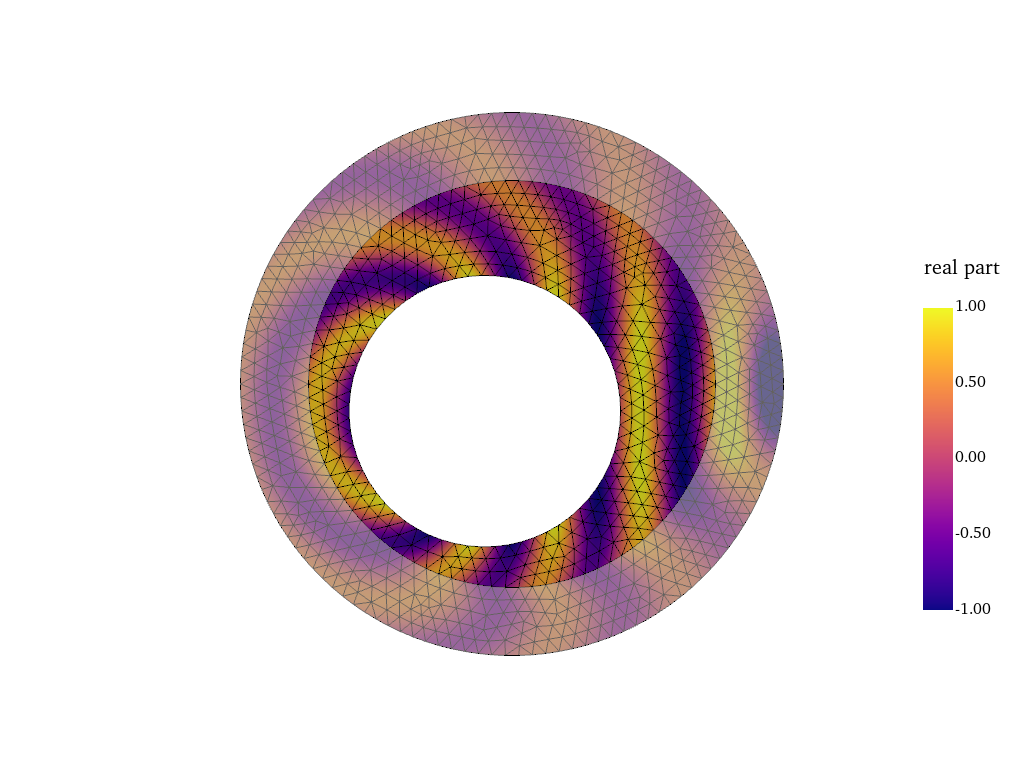}
    \caption{Hyperboloidal compactification.}
     \label{fig:circle_sol_hyp}
  \end{subfigure}
  \begin{subfigure}[t]{0.45\textwidth}
    \includegraphics[width=\textwidth,clip, trim={220 100 20 100}]{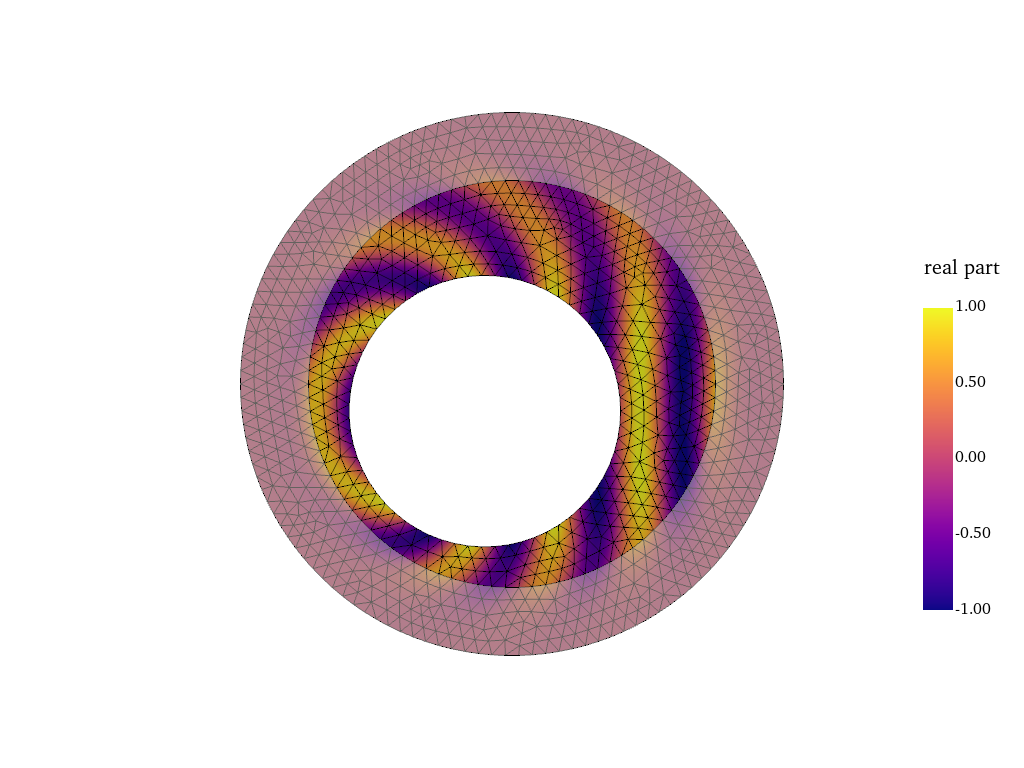}
    \caption{PML with $\sigma_{\mathrm{PML}}=10$.}
    \label{fig:circle_sol_pml}
  \end{subfigure}
  \caption{Real part of the numerical scattered field in the computational domain, computed using hyperboloidal compactification and PML, respectively, for mesh size $0.1$, polynomial order $3$, and wavenumber $k=10$.}
  \label{fig:circle_sol}
\end{figure}

Figure~\ref{fig:circle_conv_pml_order} shows the convergence of the relative $L^2(\mathcal D_{int})$ error of the PML simulation. As expected from theory, the measured slopes again agree with the $\mathsf h^{p+1}$ behavior for polynomial order $p$. For $p=6$, the error saturates near $10^{-8}$ for the displayed PML damping strength, indicating that the discretization error has fallen below the PML truncation error. This error floor can be reduced using a higher damping strength. Figure~\ref{fig:circle_conv_pml_params} shows that for larger PML damping strengths the truncation error is no longer visible in the displayed range, although the error constant also deteriorates.

\begin{figure}[h!]
  \centering
  \begin{subfigure}[t]{0.45\textwidth}
    \includegraphics[width=\textwidth]{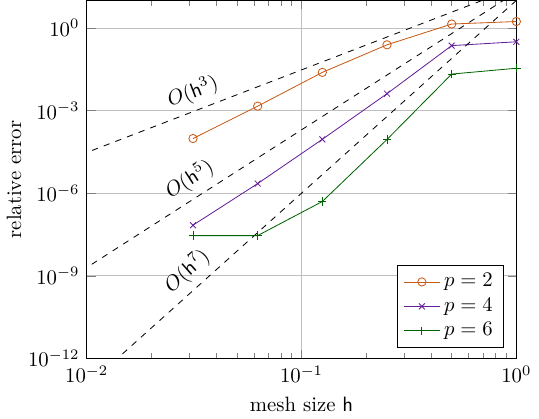}
    \caption{Relative $L^2$ error of the PML solution in $\mathcal D_{int}$ for various polynomial orders $p$ and fixed radial PML damping strength $\sigma_{\mathrm{PML}}=20$.}
     \label{fig:circle_conv_pml_order}
  \end{subfigure}
  \begin{subfigure}[t]{0.45\textwidth}
    \includegraphics[width=\textwidth]{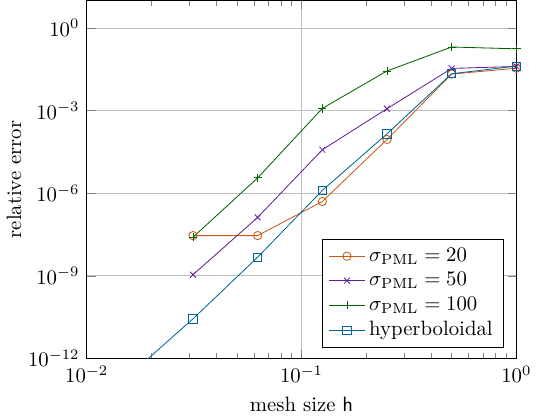}
    \caption{Relative $L^2$ error of the PML solution for different radial PML damping strengths $\sigma_{\mathrm{PML}}$ and fixed polynomial order $p=6$.}
     \label{fig:circle_conv_pml_params}
  \end{subfigure}
  \caption{Comparison of hyperboloidal layers with PML methods.}
     \label{fig:circle_conv_pml}
\end{figure}
\subsubsection{Dependence on the wavenumber}
To test the robustness of our method with respect to the wavenumber, we fix
polynomial order $6$ and mesh size $1/8$. Figure~\ref{fig:circle_wavenumbers} shows the relative $L^2(\mathcal D_{int})$ error of the hyperboloidal compactification and the PML with fixed real damping strength $\sigma_{\mathrm{PML}}=20$ for varying wavenumbers. The wide scan uses geometrically distributed wavenumbers from $10^{-5}$ to $10^2$, while the right panel samples the transition region linearly with $k=5,5.5,\ldots,20$. We observe the pollution effect, i.e., the expected deterioration of the error with increasing wavenumber for both methods. For small wavenumbers the hyperboloidal compactification substantially outperforms the PML. Around $k\approx 10$, however, the PML error is slightly smaller for this fixed discretization and damping parameter.

\begin{figure}
  \centering
  \begin{subfigure}[t]{0.45\textwidth}
    \includegraphics[width=\textwidth]{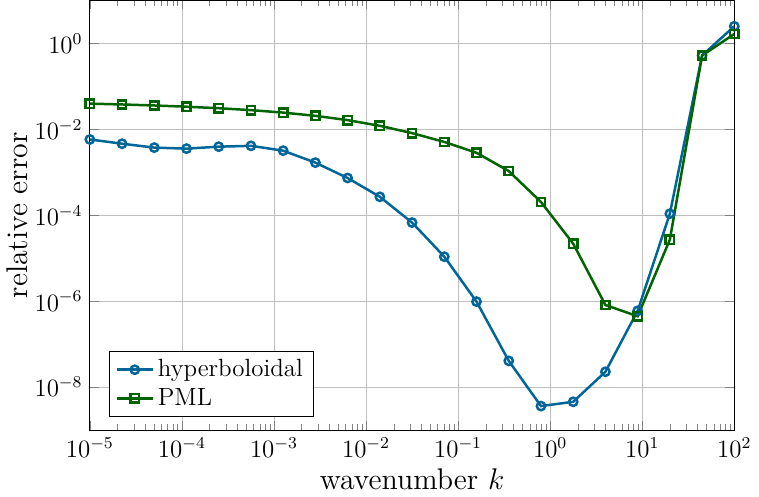}
    \caption{Geometrically distributed wavenumbers from $10^{-5}$ to $10^2$.}
    \label{fig:circle_wavenumbers_log}
  \end{subfigure}
  \begin{subfigure}[t]{0.45\textwidth}
    \includegraphics[width=\textwidth]{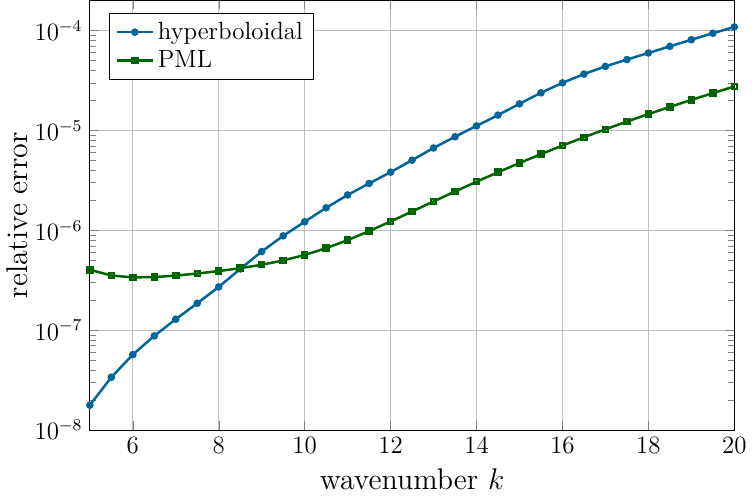}
    \caption{Linearly distributed wavenumbers in the transition region around $k=10$.}
    \label{fig:circle_wavenumbers_zoom}
  \end{subfigure}
    \caption{Relative $L^2(\mathcal D_{int})$ error of the hyperboloidal and PML solution with $\sigma_{\mathrm{PML}}=20$ for polynomial order $6$, mesh size $1/8$, and varying wavenumber $k$.}
    \label{fig:circle_wavenumbers}
\end{figure}

\subsection{Trapping geometry}
As a second example, we choose a trapping geometry (cf. Figure~\ref{fig:trapping_square_mesh}) and again compare the hyperboloidal compactification with PML methods.

\begin{figure}
  \centering
    \includegraphics[width=0.5\textwidth]{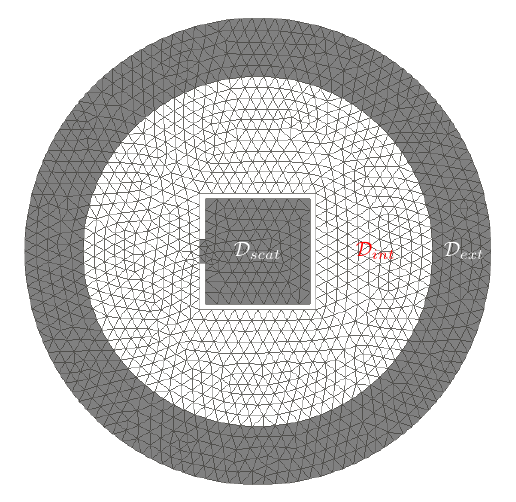}
    \caption{Geometry and mesh of the trapping square experiments.}
     \label{fig:trapping_square_mesh}
\end{figure}

The obstacle is a square cavity with exterior side length $R_{\mathrm{scat}}=1$, wall thickness $D_{\mathrm{scat}}=0.05$, and an aperture of height $D_{\mathrm{hole}}=0.2$ on the left side. We use $S=2$ for the outer compactified boundary, interface radius $R=1.5$, mesh size $0.1$, polynomial order $4$, and incident direction $d=(\cos\theta_{\mathrm{inc}},\sin\theta_{\mathrm{inc}})$ with $\theta_{\mathrm{inc}}=\pi/4$. The wavenumber scan uses $k=1,1.1,\ldots,19.9$, and the PML comparison uses the real damping strength $\sigma_{\mathrm{PML}}=2k$.
As the wavenumber varies, we expect resonant behavior at a discrete set of wavenumbers.
Figure~\ref{fig:trapping_square_wavenumbers} shows the scattering coefficient
\begin{align*}
  C_{scat}:=\frac{\|u^{\mathrm{tot}}\|_{L^2(\mathcal D_{scat})}}{\|u^{\mathrm{tot}}\|_{L^2(\mathcal D_{int})}},
\end{align*}
where $\mathcal D_{scat}$ is the interior of the trapping square (cf. Figure~\ref{fig:trapping_square_mesh}). We observe the expected peaks at certain wavenumbers as well as close agreement between the solutions obtained by PML and hyperboloidal compactification.
\begin{figure}
  \centering
    \includegraphics[width=0.8\textwidth]{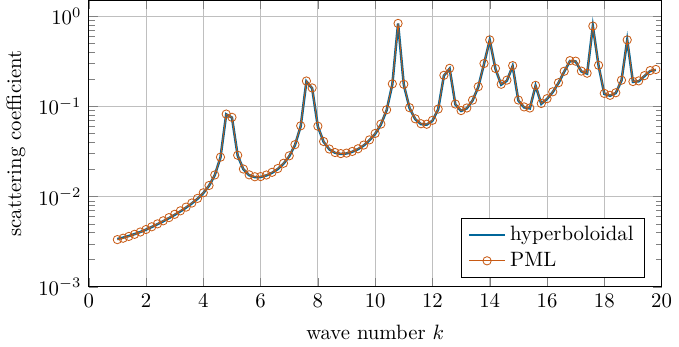}
    \caption{Scattering coefficient for the trapping square problem obtained with hyperboloidal compactification and PML for varying wavenumbers.}
     \label{fig:trapping_square_wavenumbers}
\end{figure}

Figures~\ref{fig:trapping_square_scattered} and \ref{fig:trapping_square_total} show the scattered and total fields for two wavenumbers, one non-resonant and one resonant respectively.
\begin{figure}[h!]
  \centering
  \begin{subfigure}[t]{0.45\textwidth}
    \includegraphics[width=\textwidth,clip, trim={220 100 20 100}]{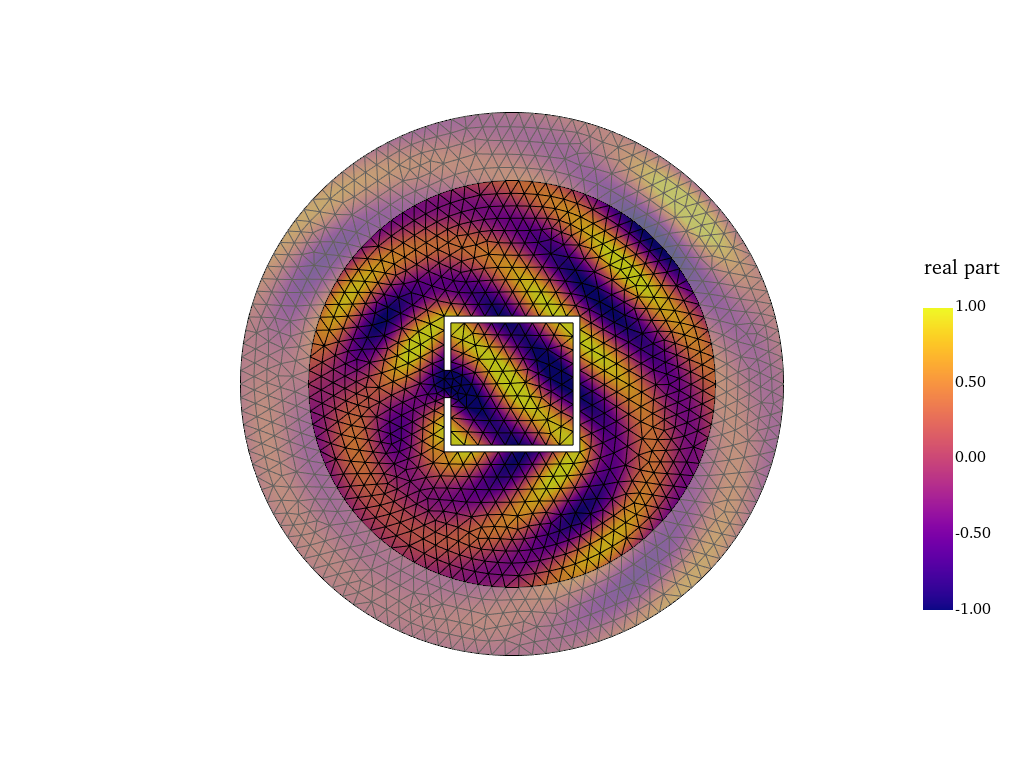}
    \caption{$k=10.6$.}
     \label{fig:trapping_square_scattered_nonres}
  \end{subfigure}
  \begin{subfigure}[t]{0.45\textwidth}
    \includegraphics[width=\textwidth,clip, trim={220 100 20 100}]{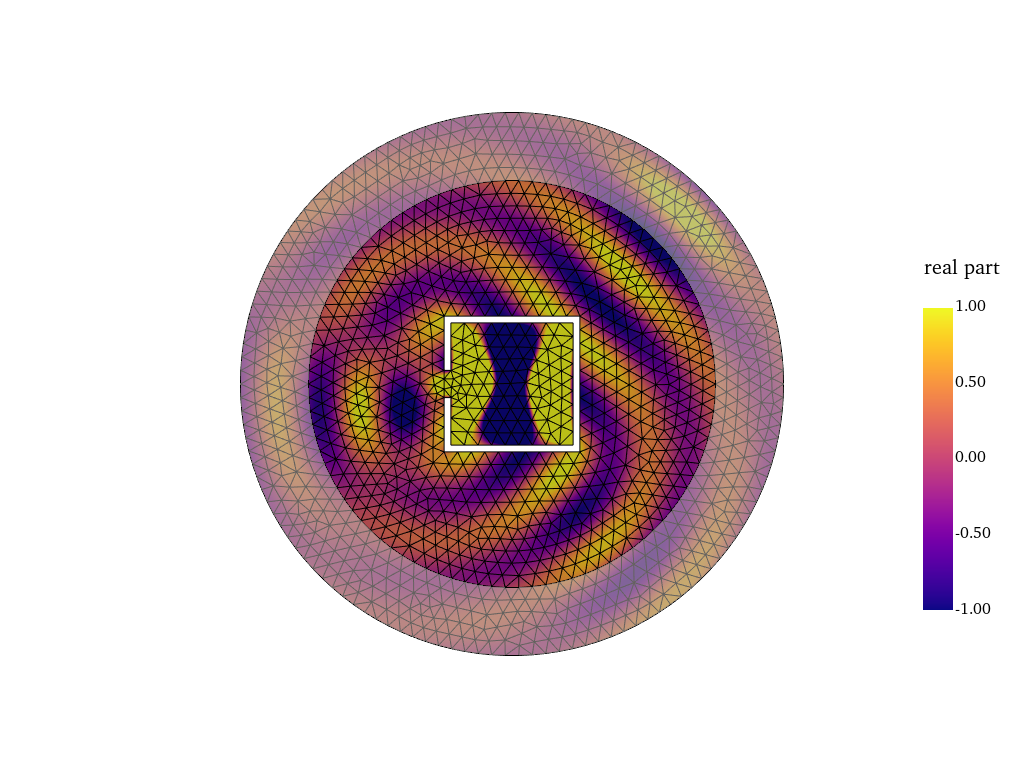}
    \caption{$k=10.8$.}
    \label{fig:trapping_square_scattered_resonant}
  \end{subfigure}
  \caption{Scattered fields for a non-resonant and a resonant wavenumber.}
  \label{fig:trapping_square_scattered}
\end{figure}
\begin{figure}[h!]
  \centering
  \begin{subfigure}[t]{0.45\textwidth}
    \includegraphics[width=\textwidth,clip, trim={220 100 20 100}]{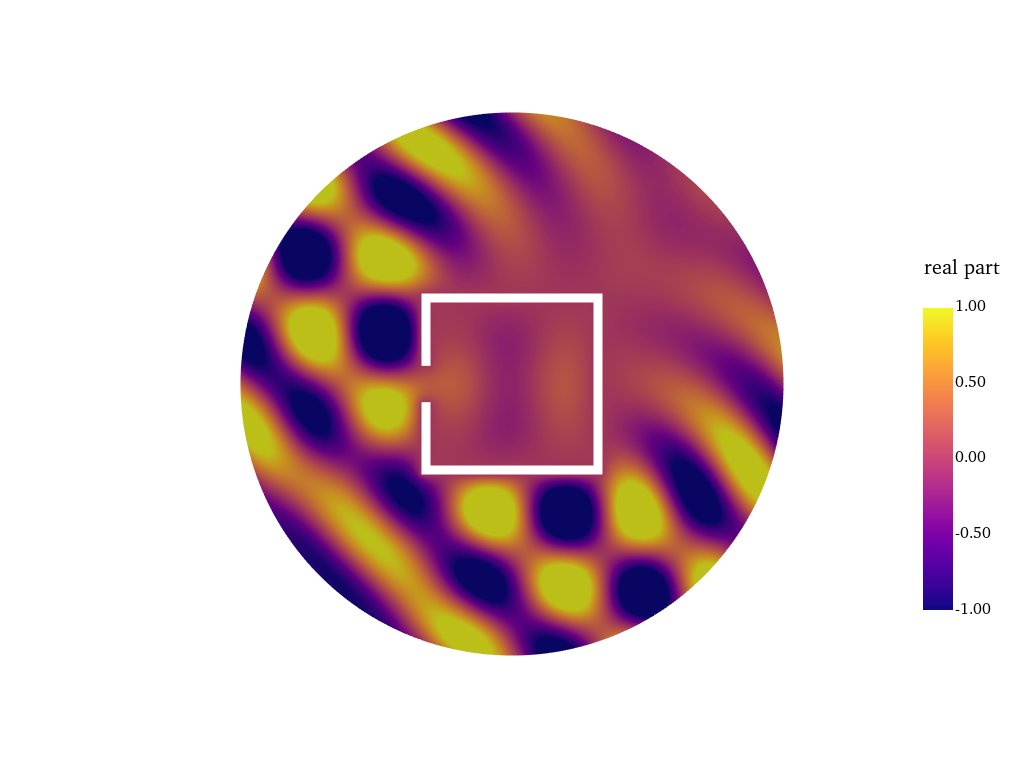}
    \caption{$k=10.6$.}
     \label{fig:trapping_square_total_nonres}
  \end{subfigure}
  \begin{subfigure}[t]{0.45\textwidth}
    \includegraphics[width=\textwidth,clip, trim={220 100 20 100}]{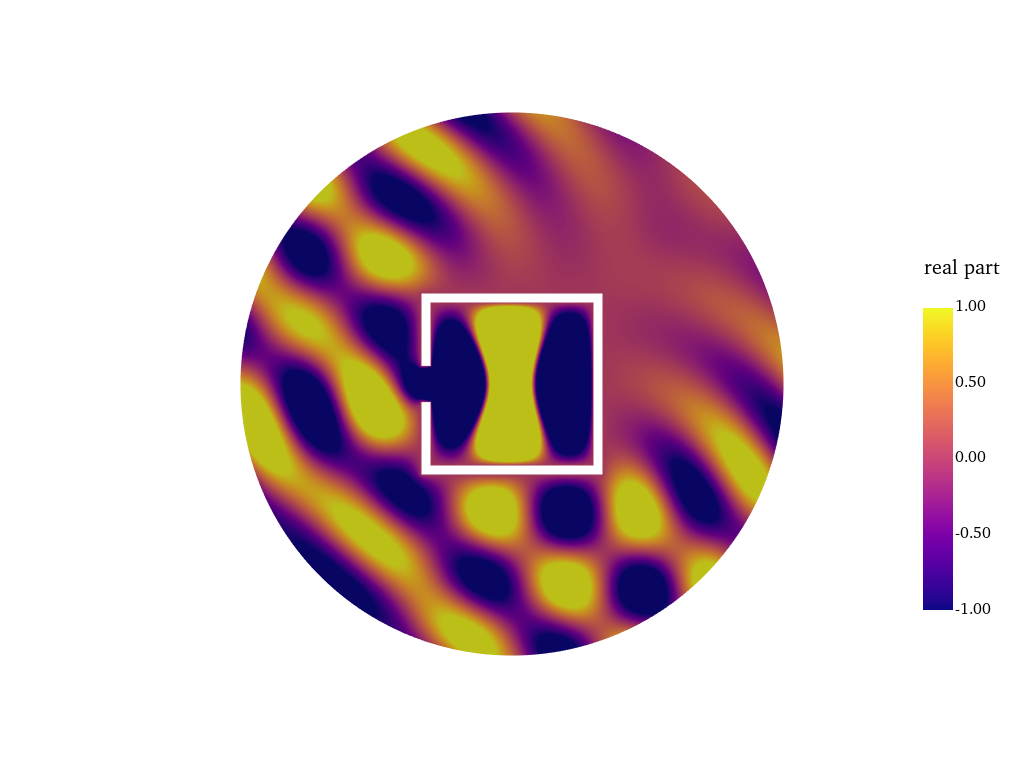}
    \caption{$k=10.8$.}
    \label{fig:trapping_square_total_resonant}
  \end{subfigure}
  \caption{Total fields for a non-resonant and a resonant wavenumber.}
  \label{fig:trapping_square_total}
\end{figure}

\subsection{Three-dimensional examples}
The same formulation applies in three spatial dimensions. We use two types of 3D experiments to test this extension. The first 3D test is a manufactured exterior Dirichlet benchmark based on a shifted outgoing point-source field. The source singularity is placed strictly inside the excluded obstacle, so the reference field solves the homogeneous Helmholtz equation throughout the computational exterior. This provides exact boundary data and an exact reference solution without introducing an incident plane wave. The second test uses the BeTSSi submarine geometry to assess the method on a realistic non-spherical obstacle.

\subsection{Manufactured outgoing-wave benchmark exterior to a ball}\label{sec:sphere}
To demonstrate high-order convergence in three dimensions, let
\begin{align*}
  c=(0.2,0.2,0.2), \qquad
  \mathcal O=B_{R_{\mathrm{scat}}}(c), \qquad
  R_{\mathrm{scat}}=1 .
\end{align*}
The compactification interface is \(R=1.5\), and mapped infinity is located at \(\rho=S\), with \(S=2>R\). We choose the source point $\tilde{x}_s=(0.6,0.6,0.6)$.
Because $\|\tilde{x}_s-c\|=\sqrt{0.48}<R_{\mathrm{scat}}$,
the singularity lies strictly inside the excluded obstacle. Hence
\[
  \Phi_k(\tilde{x})
  =
  \frac{\exp\!\left(ik\|\tilde{x}-\tilde{x}_s\|\right)}
       {\|\tilde{x}-\tilde{x}_s\|}
\]
is smooth in \(\mathbb R^3\setminus\overline{\mathcal O}\), satisfies
$(\Delta_{\tilde{x}}+k^2)\Phi_k=0$, and is outgoing. We prescribe $U=\Phi_k$
on $\partial\mathcal O$
and compare the numerical solution with \(\Phi_k\) in \(\mathcal D_{\mathrm{int}}\). The function \(\Phi_k\) is the outgoing 3D free-space Green function up to the conventional factor \(4\pi\). 
\begin{figure}[h!]
  \centering
  \begin{subfigure}[t]{0.45\textwidth}
    \includegraphics[width=\textwidth,clip, trim={220 100 20 100}]{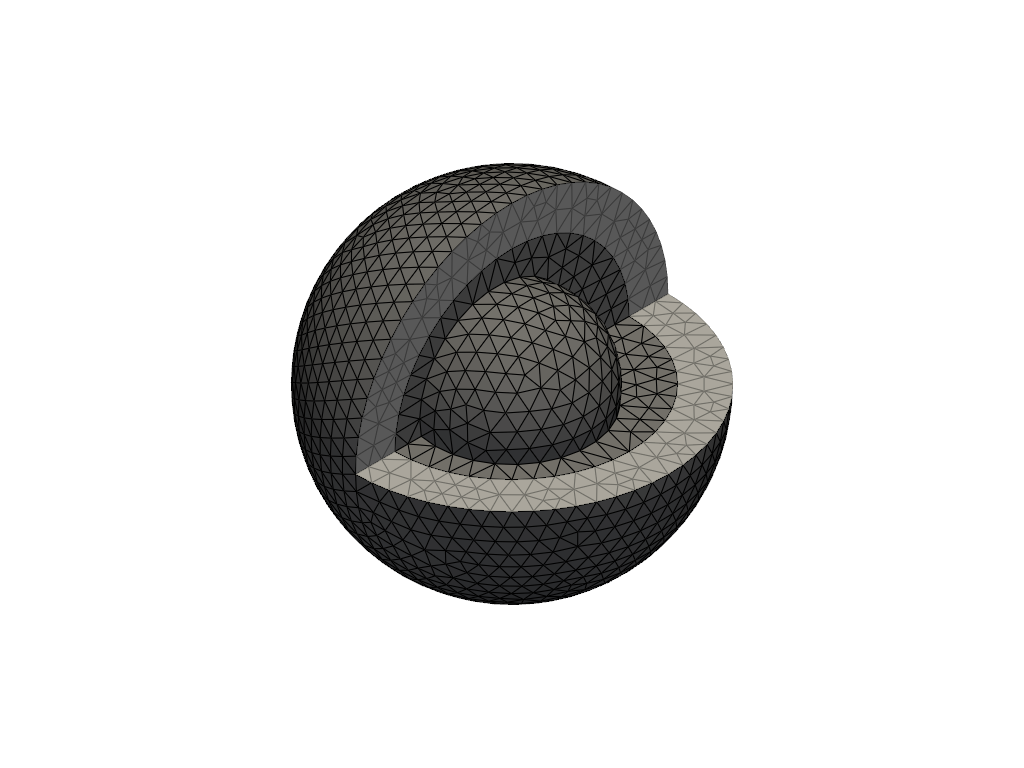}
	    \caption{Cutaway view of the curved tetrahedral mesh, including $\mathcal D_{\mathrm{int}}$ and the compactification layer.}
     \label{fig:sphere_mesh}
  \end{subfigure}
  \begin{subfigure}[t]{0.45\textwidth}
    \includegraphics[width=\textwidth,clip, trim={220 100 20 100}]{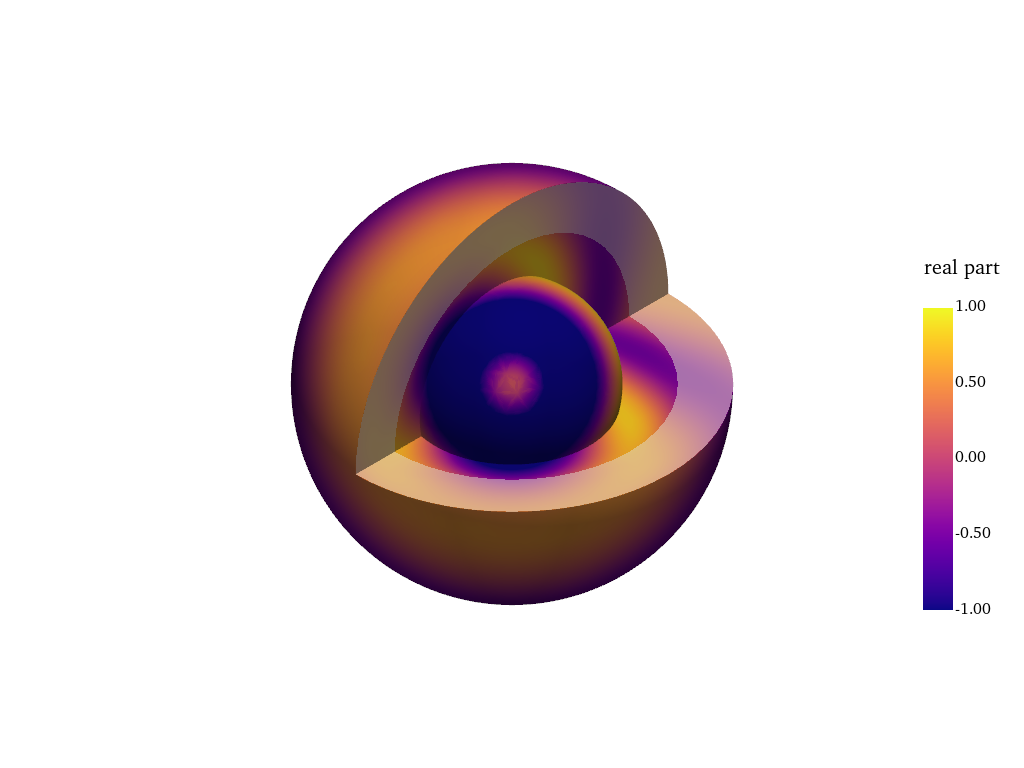}
	    \caption{Real part of the compactified numerical solution for $k=5$, $h=0.2$, and $p=2$.}
    \label{fig:sphere_sol}
  \end{subfigure}
	  \caption{Manufactured outgoing-wave benchmark exterior to an off-centered ball.}
  \label{fig:sphere_plots}
\end{figure}

Figure~\ref{fig:sphere_plots} shows a cutaway view of the mesh and the real part of the compactified numerical solution for \(h=0.2\), polynomial degree \(p=2\), and \(k=5\).

\begin{figure}[h!]
  \centering
  \begin{subfigure}[t]{0.45\textwidth}
    \includegraphics[width=\textwidth]{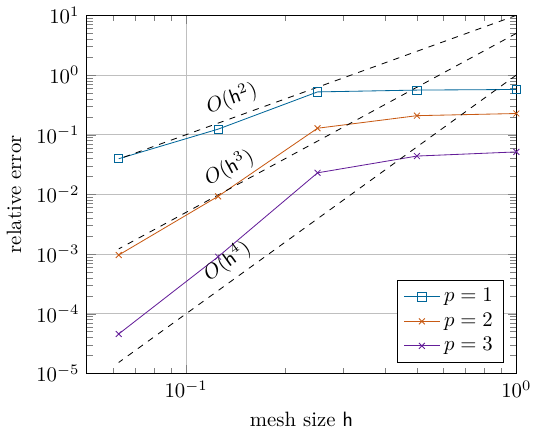}
    \caption{Relative $L^2$ error of the solution in $\mathcal D_{int}$ for various polynomial orders.}
     \label{fig:sphere_conv_orders}
  \end{subfigure}
  \begin{subfigure}[t]{0.45\textwidth}
    \includegraphics[width=\textwidth]{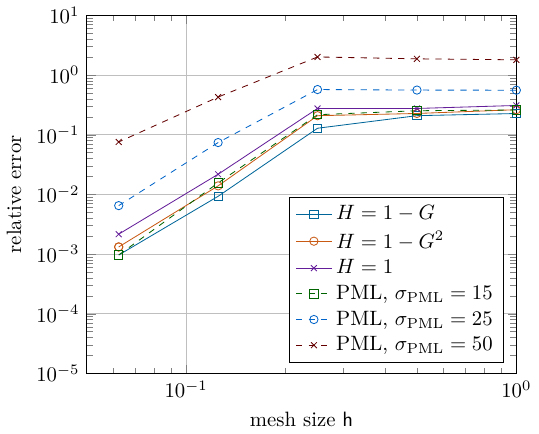}
    \caption{Relative $L^2(\mathcal D_{\mathrm{int}})$ error for the indicated boost functions and PML damping strengths at $p=2$.}
     \label{fig:sphere_conv_methods}
  \end{subfigure}
  \caption{Convergence for the shifted-Green-function manufactured benchmark at $k=5$, with $R=1.5$ and $S=2$.}
     \label{fig:sphere_conv}
\end{figure}

Figure~\ref{fig:sphere_conv} reports the relative \(L^2(\mathcal D_{\mathrm{int}})\) error with respect to \(\Phi_k\). Again, as expected, the observed slopes are consistent with \(\mathsf h^{p+1}\) convergence for the tested polynomial degrees. The influence of the choice of $H$ on the performance of the method is rather marginal; however, we again observe a drastic change of the error constant when varying the PML damping parameter.

Figure~\ref{fig:sphere_wavenumbers} shows the dependence of the relative $L^2(\mathcal D_{\mathrm{int}})$ error on the wavenumber for fixed mesh size $0.2$ and polynomial order $2$. The left panel uses geometrically distributed wavenumbers from $10^{-5}$ to $10^2$, while the right panel samples the transition region linearly with $k=5,5.5,\ldots,20$. The PML uses the real damping strength $\sigma_{\mathrm{PML}}=15$. As in the 2D disk experiment, the compactified formulation gives smaller errors in the low-wavenumber regime for the tested PML configuration, while the PML error becomes smaller once the fixed discretization enters the high-wavenumber, under-resolved regime. Compared with the disk experiment, this transition occurs at a smaller wavenumber, around $k\approx 5.8$, and the errors in the linear scan are much larger. This is consistent with the 3D benchmark using a coarser, lower-order discretization, so the interval around $k=10$ is already underresolved.

\begin{figure}[h!]
  \centering
  \begin{subfigure}[t]{0.45\textwidth}
    \includegraphics[width=\textwidth]{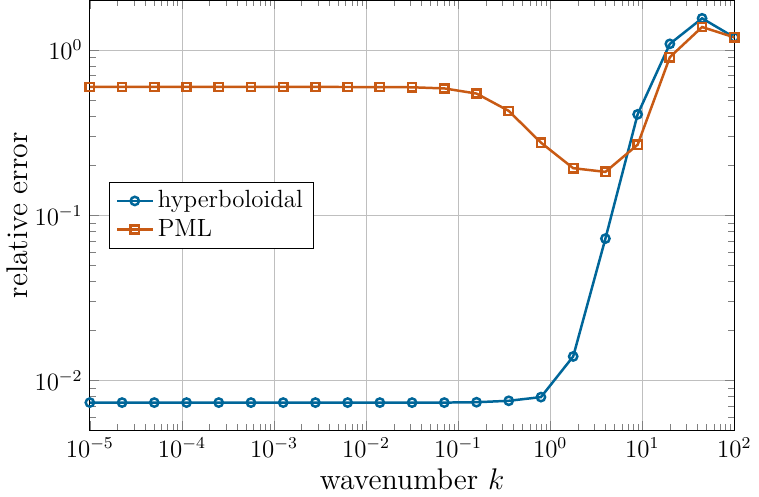}
    \caption{Geometrically distributed wavenumbers from $10^{-5}$ to $10^2$.}
    \label{fig:sphere_wavenumbers_log}
  \end{subfigure}
  \begin{subfigure}[t]{0.45\textwidth}
    \includegraphics[width=\textwidth]{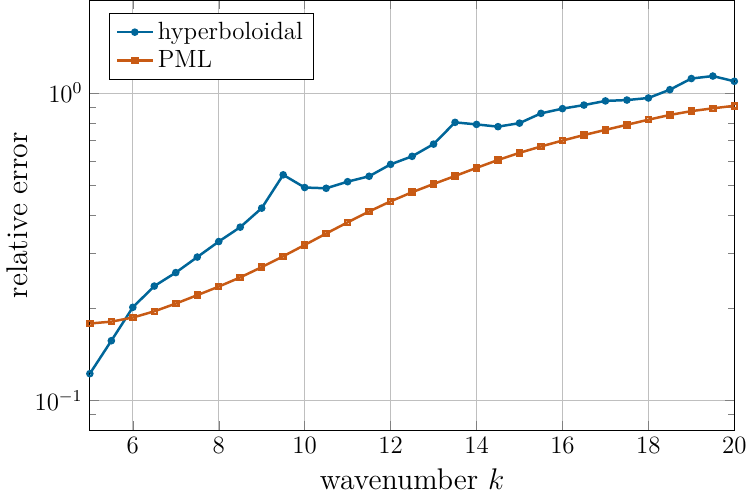}
    \caption{Linearly distributed wavenumbers in the transition region around $k=10$.}
    \label{fig:sphere_wavenumbers_zoom}
  \end{subfigure}
    \caption{Relative $L^2(\mathcal D_{\mathrm{int}})$ error for the shifted-Green-function manufactured benchmark, using $h=0.2$, $p=2$, and $\sigma_{\mathrm{PML}}=15$, as a function of the wavenumber $k$.}
    \label{fig:sphere_wavenumbers}
\end{figure}

\subsection{Scattering by a submarine}

To demonstrate the applicability of our method to more complex domains, we use the BeTSSi submarine test case
originally introduced as a Benchmark Target Strength Simulation test case for acoustic submarine
scattering \cite{schneider2003betssi}.
The CAD model used here is the BeTSSi geometry distributed in the NIRD Research Data
Archive \cite{venas2019betssi_data, venas2020igabem_submarine}.

The submarine is embedded in a ball of radius \(R=35\,\mathrm{m}\). We surround the computational domain by a layer with mapped infinity at \(S=40\,\mathrm{m}\). Figure~\ref{fig:submarine_meshes} shows the mesh with mesh size \(2\,\mathrm{m}\) of the submarine and the computational domain. Figure~\ref{fig:submarine_fields} shows the resulting approximations of the total and scattered fields, computed with second-order finite elements. The discretization has approximately \(3\times 10^5\) degrees of freedom.

\begin{figure}[h!]
  \centering
  \begin{subfigure}[t]{0.45\textwidth}
    \includegraphics[width=\textwidth,clip, trim={0 0 0 0}]{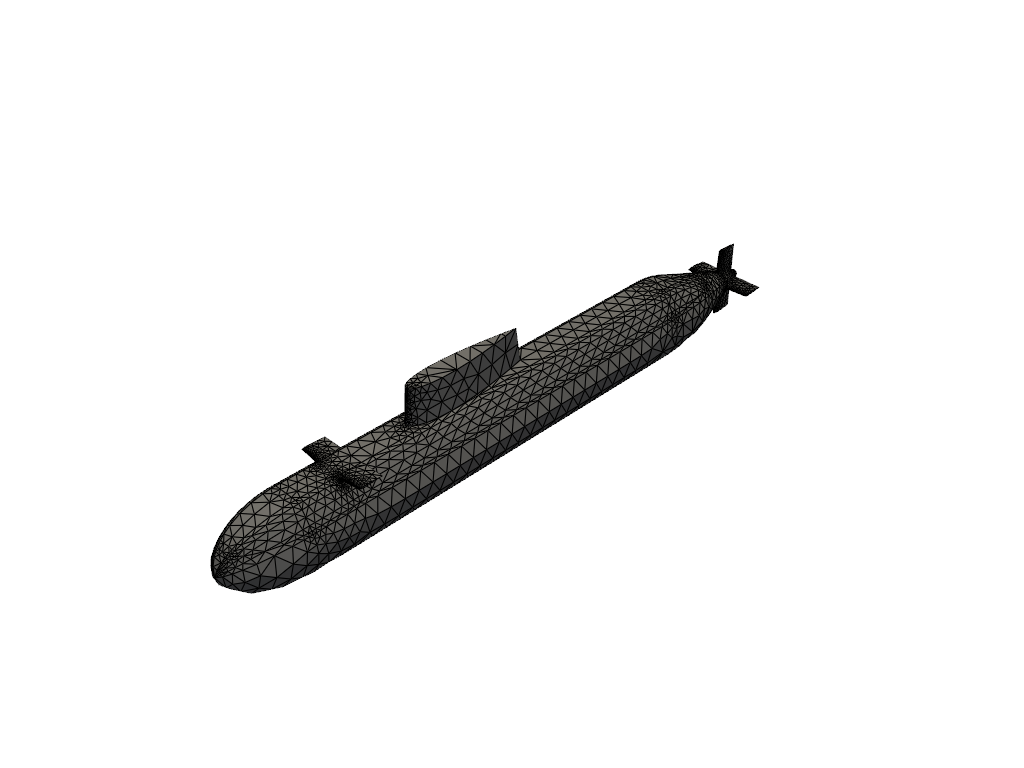}
    \caption{Mesh of the submarine.}
     \label{fig:submarine_only_sub_mesh}
  \end{subfigure}
  \begin{subfigure}[t]{0.45\textwidth}
    \includegraphics[width=\textwidth,clip, trim={0 0 0 0}]{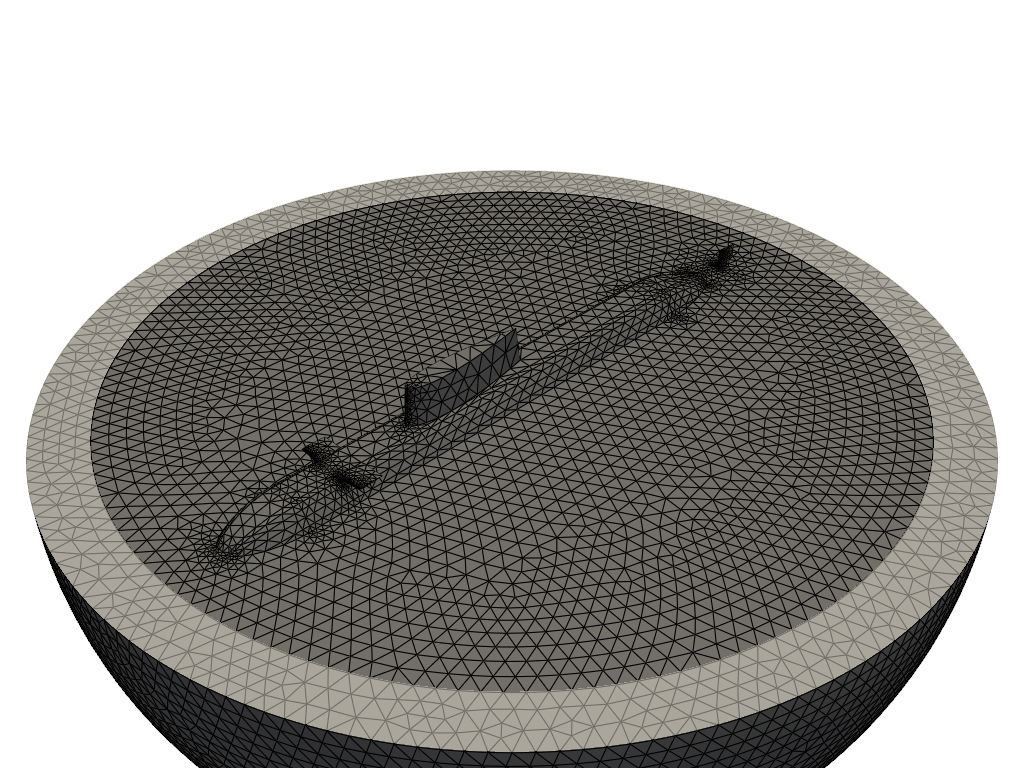}
    \caption{Mesh of the submarine and surrounding domains.}
    \label{fig:submarine_mesh}
  \end{subfigure}
  \caption{Meshes of the submarine experiment.}
  \label{fig:submarine_meshes}
\end{figure}

\begin{figure}[h!]
  \centering
  \begin{subfigure}[t]{0.45\textwidth}
    \includegraphics[width=\textwidth,clip, trim={220 100 20 100}]{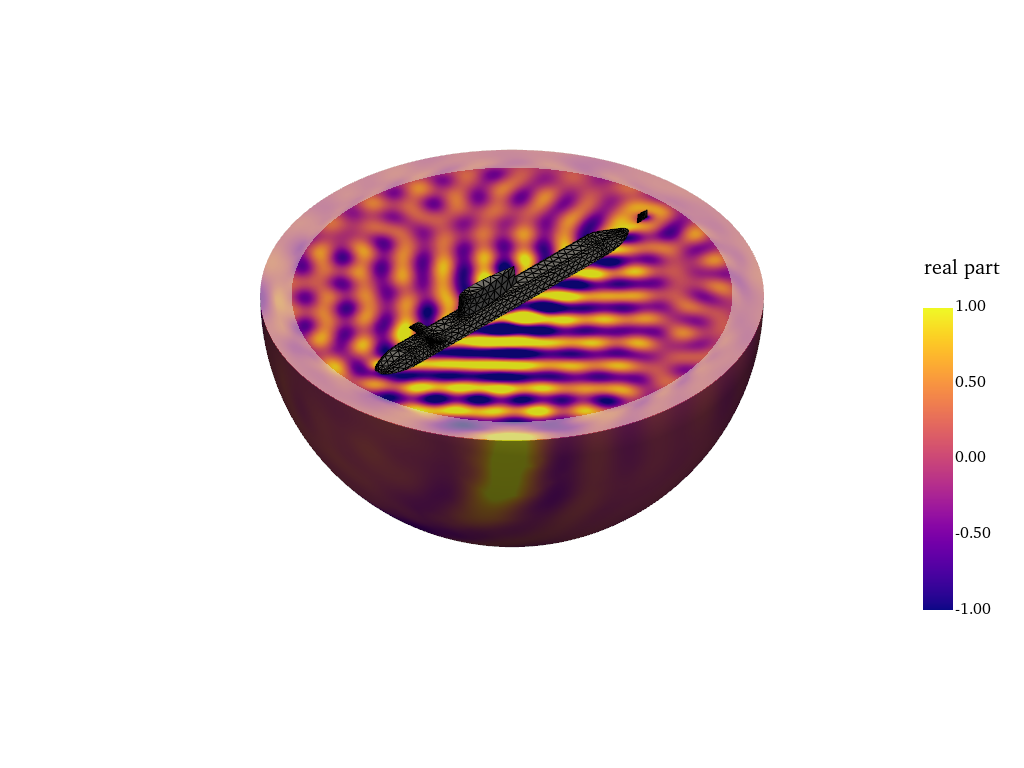}
    \caption{The real part of the scattered field, including the far field.}
     \label{fig:submarine_scattered}
  \end{subfigure}
  \begin{subfigure}[t]{0.45\textwidth}
    \includegraphics[width=\textwidth,clip, trim={220 100 20 100}]{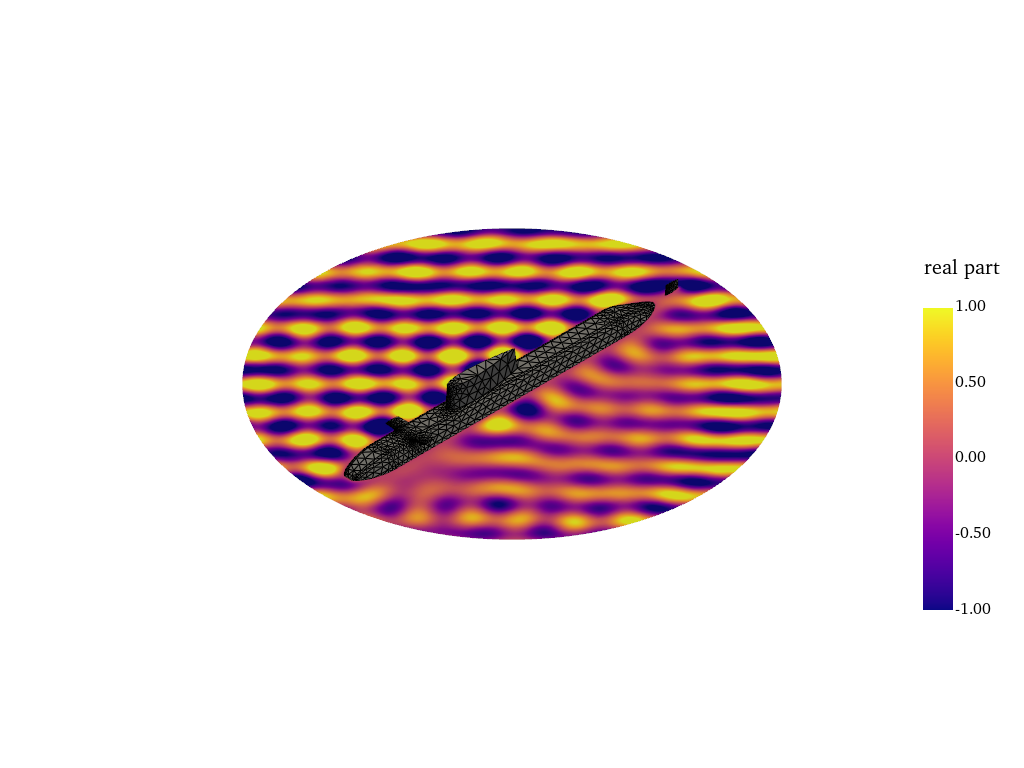}
    \caption{The real part of the total field.}
    \label{fig:submarine_total}
  \end{subfigure}
  \caption{Scattering of a plane wave (incoming from the top) on a submarine.}
  \label{fig:submarine_fields}
\end{figure}

\section{Conclusions}
\label{sec:conclusions}

We derived a finite element formulation of hyperboloidal compactification for the exterior Helmholtz equation. The construction combines a smooth radial compactification layer with a rescaling of the outgoing field. For smooth boundary defining functions and boost functions satisfying the boundedness condition on $(1-H^2)/G$, we proved that the pulled-back weak form has bounded coefficients on the compactified domain. The compactified boundary $\scri$ requires no essential boundary condition; the outgoing condition is encoded by the phase transformation and by the $H^1$ regularity of the compactified unknown. The trace at $\scri$ gives the far-field pattern up to an explicit normalization.

The numerical experiments demonstrate the practical behavior of this
formulation for high-order finite elements. For the disk benchmark, the
method shows the expected convergence both in the near field and in the
far-field pattern. The manufactured benchmark demonstrates the
corresponding high-order near-field convergence in three dimensions.
Comparisons with radial PML discretizations show comparable high-order
convergence in the resolved regime, while the compactified formulation
provides far-field data directly at $\scri$. For the PML configurations
tested here, the compactified formulation gives smaller errors in the
low-wavenumber regime. The trapping square experiment in 2D and the
submarine example in 3D show the applicability of the formulation to
more complex geometries.

For future work, the method should be extended to more general compactifications to handle non-spherical boundaries adapted to elongated obstacles. The analysis should be extended to a full stability and well-posedness theory for the resulting non-selfadjoint finite element problems. Extensions to heterogeneous media, more general boundary conditions, Maxwell fields, and large-scale three-dimensional benchmarks remain important open directions.



\section*{Acknowledgments}

The authors thank ESI for hospitality during the conference on ``Hyperboloidal Foliations and their Application.'' AZ was supported by the National Science Foundation under Grant No.~2309084. 

\section*{Declaration of generative AI and AI-assisted technologies in the manuscript preparation process}

During the preparation of this work, AZ used ChatGPT to assist with coding and manuscript review. After using this tool, the authors reviewed and edited the manuscript as needed and take full responsibility for the content of the published article.

\appendix

\section{Exact solution for scattering at a disk}
\label{sec:ref_sol}
\subsection{Centered}


Throughout this appendix, coordinates are physical coordinates. To avoid conflict with the compactified coordinate used in the main text, we write them as $\tilde x=(\tilde x_1,\tilde x_2)$.

We have a disk $D := \{\tilde x\in\mathbb{R}^2:\ |\tilde x|<1\}$, and we solve the Helmholtz equation for $k>0$ on the complement $D^c := \mathbb{R}^2\setminus \overline{D}$ with incident plane wave propagating in the $+\tilde x_1$ direction, $U^{\mathrm{i}}(\tilde x_1,\tilde x_2)=e^{ik\tilde x_1}$. We seek the scattered field $U^{\mathrm{s}}$ such that the total field
$U^{\mathrm{tot}}=U^{\mathrm{i}}+U^{\mathrm{s}}$ satisfies the exterior Dirichlet scattering problem
\begin{align}
(\Delta + k^2)\,U^{\mathrm{s}} &= 0 && \text{in }D^c,\\
U^{\mathrm{s}} &= -U^{\mathrm{i}} && \text{on }\partial D\ (r=1),\\
\lim_{r\to\infty}\sqrt{r}\,\left(\partial_r U^{\mathrm{s}} - i k U^{\mathrm{s}}\right) &= 0 && \text{(Sommerfeld)}.
\end{align}

In polar coordinates $(r,\theta)$ with $\tilde x_1=r\cos\theta$ and $\tilde x_2=r\sin\theta$, the Jacobi--Anger expansion gives
\[
U^{\mathrm{i}}(r,\theta)=e^{ikr\cos\theta}
=\sum_{n=-\infty}^{\infty} i^{\,n}\,J_n(kr)\,e^{in\theta},
\]
where $J_n$ is the Bessel function of the first kind. A radiating ansatz for the scattered field is
\[
U^{\mathrm{s}}(r,\theta)=\sum_{n=-\infty}^{\infty} a_n\,H_n^{(1)}(kr)\,e^{in\theta},
\qquad r>1,
\]
where $H_n^{(1)}$ is the outgoing Hankel function. Imposing the boundary
condition $U^{\mathrm{i}}+U^{\mathrm{s}}=0$ at $r=1$ yields
$i^{\,n}J_n(k)+a_nH_n^{(1)}(k)=0$, hence
$a_n=-i^{\,n}J_n(k)/H_n^{(1)}(k)$ for each Fourier mode $n$.
Therefore the explicit scattered field is
\[
U^{\mathrm{s}}(r,\theta)
=
-\sum_{n=-\infty}^{\infty}
i^{\,n}\,\frac{J_n(k)}{H_n^{(1)}(k)}\,
H_n^{(1)}(kr)\,e^{in\theta},
\qquad r>1.
\]

We define the far-field pattern at infinity, $u_\infty(\theta)$, by
$U^{\mathrm{s}}(r,\theta)\sim e^{ikr}u_\infty(\theta)/\sqrt r$ as
$r\to\infty$.
Using the large-argument asymptotic for the radiating Hankel function
\[
H_n^{(1)}(kr)
\sim
\sqrt{\frac{2}{\pi kr}}\,
\exp\!\left(i\left(kr-\tfrac{n\pi}{2}-\tfrac{\pi}{4}\right)\right),
\qquad r\to\infty,
\]
we obtain the 2D far-field
\begin{equation}\label{eq:disk_farfield}
u_\infty(\theta)
=
-\sqrt{\frac{2}{\pi k}}\,e^{-i\pi/4}
\sum_{n=-\infty}^{\infty}
\frac{J_n(k)}{H_n^{(1)}(k)} e^{in\theta}.
\end{equation}

\subsection{Off-centered}

An off-centered unit disk is given by $D_a := \{\tilde x\in\mathbb{R}^2:\ |\tilde x-a|<1\}$, where the offset is $a=(a_1,a_2)\in\mathbb{R}^2$. The exterior Dirichlet scattering problem is the same as in the centered case, except that the boundary condition is applied on the boundary of the off-centered disk. To write the solution explicitly, we introduce polar coordinates about the offset $a$ by
$\tilde x=a+r_a(\cos\phi,\sin\phi)$, with $r_a:=|\tilde x-a|$ and
$\phi:=\arg(\tilde x-a)$.
Then, since $\tilde x_1=a_1+r_a\cos\phi$,
\[
U^{\mathrm{i}}(\tilde x)=e^{ik\tilde x_1}=e^{ik(a_1+r_a\cos\phi)}=e^{ika_1}\,e^{ikr_a\cos\phi}=e^{ika_1}\sum_{n=-\infty}^{\infty} i^{\,n}J_n(kr_a)\,e^{in\phi},
\]
where we used the Jacobi--Anger expansion in the last step.
A radiating separated-variable ansatz for the scattered field about $a$ is
\[
U^{\mathrm{s}}(r_a,\phi)=\sum_{n=-\infty}^{\infty} a_n\,H_n^{(1)}(kr_a)\,e^{in\phi},
\qquad r_a>1,
\]
where $H_n^{(1)}$ is the outgoing Hankel function. Imposing the Dirichlet boundary condition on the off-centered disk mode by mode, we arrive at the solution
\[
U^{\mathrm{s}}(\tilde x)
=
-\,e^{ika_1}\sum_{n=-\infty}^{\infty}
i^{\,n}\,\frac{J_n(k)}{H_n^{(1)}(k)}\,
H_n^{(1)}\!\left(k|\tilde x-a|\right)\,e^{in\arg(\tilde x-a)},
  \qquad |\tilde x-a|>1.
\]
To evaluate this solution at a centered radius about the origin, $|\tilde x|=R$, we note
$r_a(\theta)=|\tilde x-a|=\sqrt{(R\cos\theta-a_1)^2+
(R\sin\theta-a_2)^2}
=\sqrt{R^2+|a|^2-2R(a_1\cos\theta+a_2\sin\theta)}$ and
$\phi(\theta)=\arg(\tilde x-a)=\operatorname{atan2}(R\sin\theta-a_2,R\cos\theta-a_1)$.
Therefore,
\[
U^{\mathrm{s}}(R,\theta)
=
-\,e^{ika_1}\sum_{n=-\infty}^{\infty}
i^{\,n}\,\frac{J_n(k)}{H_n^{(1)}(k)}\,
H_n^{(1)}\!\left(kr_a(\theta)\right)\,e^{in\phi(\theta)}.
\]

For the far field, we note the observation direction $\hat x=(\cos\theta,\sin\theta)$ and the incident direction $d=(1,0)$.
Then we apply a translation rule to obtain the far-field for the shifted disk:
$u_\infty^{(a)}(\theta)=e^{ik(d-\hat x)\cdot a}u_\infty^{(0)}(\theta)
=e^{ik(a_1-\hat x\cdot a)}u_\infty^{(0)}(\theta)
=\exp(ik(a_1-a_1\cos\theta-a_2\sin\theta))u_\infty^{(0)}(\theta)$,
where $u_\infty^{(0)}(\theta)$ is given in \eqref{eq:disk_farfield}. To summarize, the off-centered far-field differs from the centered far-field only by a phase factor:
\[
u_\infty^{(a)}(\theta)
=
\exp\!\left(ik(a_1-a_1\cos\theta-a_2\sin\theta)\right)\,
\sqrt{\frac{2}{\pi k}}\,e^{-i\pi/4}
\sum_{n=-\infty}^{\infty}\left(-\frac{J_n(k)}{H_n^{(1)}(k)}\right)e^{in\theta}.
\]

\bibliographystyle{unsrt}
\bibliography{refs.bib}

\end{document}